\documentclass[10pt,notitlepage,reqno]{amsart}

\usepackage{verbatim}
\usepackage{amssymb,amsmath,amsthm,mathrsfs,mathtools}
\usepackage{amscd}
\usepackage[breaklinks=true]{hyperref}
\usepackage{url}
\usepackage{units}
\usepackage[usenames,dvipsnames]{xcolor}
\usepackage{graphicx}
\usepackage[all]{xy}
\usepackage{amsmath}
\usepackage{cleveref}
\usepackage{xy}
\hypersetup{
  colorlinks,
  linkcolor	=	Gray,
  urlcolor 	= Gray,
  citecolor	=	Gray,
  pdfauthor={Mirco Richter},
  pdfkeywords={Lie, infinity, Rinehart, Schouten, exterior, codifferential},
  pdftitle={Lie oo-algebras from Lie Rinehart pairs},
  pdfsubject={Lie, Rinehart, infinity},
  pdfpagemode=UseNone
}

\let\originalleft\left
\let\originalright\right
\renewcommand{\left}{\mathopen{}\mathclose\bgroup\originalleft}
\renewcommand{\right}{\aftergroup\egroup\originalright}

\interdisplaylinepenalty=250

\newcommand{\bwedge}{\raisebox{0.3ex}{${\scriptstyle\bigwedge\,}$}}
\newcommand{\bbwedge}[1]{\raisebox{0.3ex}{${\scriptstyle \bigwedge}$}
 \ensuremath{^{\raisebox{-0.2ex}{${\scriptstyle #1}$}}\,}}

\newcommand{\smwedge}{{\scriptstyle \;\wedge\;}}
\newcommand{\ssmwedge}{{\scriptstyle \;\wedge}}
\newcommand{\ordinal}[1]{[$\;\!#1\,$]}
\newcommand{\R}{\mathbb{R}}
\newcommand{\Z}{\mathbb{Z}}
\newcommand{\N}{\mathbb{N}}

\newcommand{\NN}{\mathbb{N}_0}

\newcommand{\I}{\infty}

\newcommand{\BIGOP}[1]{\mathop{\mathchoice
{\raise-0.22em\hbox{\huge $#1$}}
{\raise-0.05em\hbox{\Large $#1$}}{\hbox{\large $#1$}}{#1}}}

\def\clap#1{\hbox to 0pt{\hss#1\hss}}

\theoremstyle{plain}
\newtheorem{theorem}{Theorem}[section]
\newtheorem{prop}[theorem]{Proposition}

\newtheorem{corollary}[theorem]{Corollary}

\newtheorem{definition}[theorem]{Definition}

\theoremstyle{remark}
\newtheorem*{remark}{Remark}
\newtheorem{example}{Example}

\title{Lie $\I$-algebras from Lie - Rinehart Pairs}
\author{Mirco Richter }
\date {\today}
\thanks{email: \href{mailto:mirco.richter@email.de}{\tt mirco.richter@email.de}}


\begin{document}

\begin{abstract}  
We generalize the Schouten calculus of multivector fields to 
commutative Lie Rinehart pairs and define a non negatively graded 
Lie $\I$-algebra on their exterior power.
\end{abstract}
\maketitle
\section{Introduction}Lie Rinehart pairs generalize the algebraic structure of
vector fields and smooth functions to commutative algebras and Lie algebras, 
which are some kind of modules with 
respect to each other. In particular any Lie algebra together with its
underlying field defines a Lie Rinehart pair.

Given such a pair, we look at its exterior algebra, that is the
exterior power of the Lie partner seen as a module with respect to the commutative 
algebra. In case of vector fields and smooth functions, this is precisely 
the algebra of \textit{multivector fields}.

The exterior power of an ordinary Lie algebra has many structures.
Scientist with a more algebraic background eventually look on it as a 
particular codifferential graded \textit{coalgebra}, where the
codifferential encodes the Lie algebra structure \cite{MM}, 
while scientist coming more from differential
geometry likely see it as another graded Lie algebra with respect 
to the \textit{Schouten-Nijenhuis} bracket \cite{PM}.

We will show that the codifferential approach is not necessarily well 
defined with respect to the additional module structure, but that the 
Schouten-Nijenhuis bracket is natural.

Then we provide
a non negatively graded Lie $\I$-algebra  
on the exterior power, which comes with a natural injection of
the original Lie Rinehart pair. In contrast to ordinary Lie theory this injection 
is not a single map, but a whole sequence of maps. 
Those functions are usually called
\textit{weak} Lie $\I$-morphisms and we explain them in more detail in appendix A.

On the level of objects, this will merely be a shift in perspective, 
but we get a considerably richer theory on the level of morphisms, 
since we gain access to functions, which are much more flexible then 
ordinary map of (graded) Lie algebras.
\section{The Schouten-Nijenhuis Algebra of a Lie Rinehart pair} We start our work
with a short introduction to Lie Rinehart pairs. 
We look at their exterior powers and show that in
contrast to ordinary Lie algebras, in general there is no well 
defined (co)differential in this setting anymore. Then we introduce the
Schouten-Nijenhuis bracket, well known from differential geometry. 
\subsection{Lie Rinehart pairs}
In what follows $\mathfrak{g}$ will always be a real Lie algebra, that is a $\R$-vector space together with an antisymmetric, bilinear map,
\begin{equation}
[\cdot,\cdot]: \mathfrak{g}\times\mathfrak{g}\to\mathfrak{g}
\end{equation}
called \textbf{Lie bracket}, such that for any three vector $x_1$, $x_2$ and $x_3\in\mathfrak{g}$ the \textbf{Jacobi identity}
$[x_1,[x_2,x_3]]+[x_2,[x_3,x_1]]+[x_3,[x_1,x_2]]=0$ is satisfied.

In addition $A$ will always be a real
associative and commutative algebra with unit, 
that is a $\R$-vector space together with an associative and commutative, bilinear map
\begin{equation}
\cdot : A \times A \to A
\end{equation}
called multiplication and a unit $1_A\in A$.
According to a better readable text, 
we frequently suppress the symbol of the multiplication in $A$ and just
write $ab$ instead of $a\cdot b$.

Moreover $Der(A)$ will be the Lie algebra of derivations of $A$, 
that is the vector space of linear endomorphisms of $A$, 
with $D(ab)=D(a)b+aD(b)$ and Lie bracket
$[D,D'](a):=D(D'(a))-D'(D(a))$ for any $a,b\in A$ and $D,D'\in Der(A)$.

Before we get to Lie Rinehart pairs, it is handy to define Lie algebra modules first:
\begin{definition}[Lie algebra module]
Let $\mathfrak{g}$ be a real Lie algebra, $A$ an $\R$-algebra and
$D:\mathfrak{g}\to Der(A)$ a Lie algebra morphism. Then $A$ is called
a \textbf{Lie algebra module} (or just $\mathfrak{g}$-module) and $D$
is called the $\mathfrak{g}$\textbf{-scalar multiplication}.
\end{definition}
Now a \textit{Lie Rinehart pair} is nothing but a Lie algebra 
and an associative algebra, 
each of them being a module with respect to the other, such that 
a particular compatibility equation of their multiplications is satisfied: 
\begin{definition}[Lie Rinehart Pair] Let $A$ be an associative and 
commutative algebra with unit, $\mathfrak{g}$ a Lie algebra and 
$\cdot_A: A\times \mathfrak{g} \to \mathfrak{g}$ as well as
$D: \mathfrak{g}\to Der(A)\;;\; x\mapsto D_x$ maps, such that 
$A$ is a $\mathfrak{g}$-module with $\mathfrak{g}$-scalar multiplication
$D$, the vector space $\mathfrak{g}$ is
an $A$-module with $A$-scalar multiplication $\cdot_A$ and 
the \textbf{Leibniz rule}
\begin{equation}
[x,a\cdot_A y] = D_x(a)\cdot_A y + a \cdot_A[x,y]
\end{equation}
is satisfied for any $x,y\in\mathfrak{g}$ and $a\in A$. Then
$\left(A,\mathfrak{g}\right)$ is called a \textbf{Lie Rinehart pair}.
\end{definition}
This can be defined more general over arbitrary ground rings with unit and also
with respect to non commutative algebras $A$. We stick to the commutative 
situation, since we need that property to define exterior powers later on.
A more general introduction can be found in \cite{JH} and in the references therein. 

The two most extreme examples coming from commutative algebras on one side and
Lie algebras on the other:
\begin{example}
For any commutative and associative algebra with unit $A$, a 
Lie  pair is given by $(A, Der(A))$, together with the standard 
action of $Der(A)$ on $A$ and the standard $A$-module structure of $Der(A)$.
\end{example}
\begin{example}
Any real Lie algebra $\mathfrak{g}$ is a $\R$-module
with respect to its ordinary scalar multiplication 
and togeter with the \textit{trivial action} of $\mathfrak{g}$ on $\R$, given by
$$
D:\mathfrak{g}\times\R \to\R\;;\; (x,\lambda)\mapsto D_x(\lambda):=0,
$$
the pair $\left(\R,\mathfrak{g}\right)$ becomes a Lie Rinehart pair.
\end{example}
As mentioned before the archetypical example is 
provided by smooth functions and vector fields on a differentiable manifold:
\begin{example}
Let $M$ be a differentiable manifold, $C^\I(M)$ the algebra of smooth, 
real valued functions and $\mathfrak{X}(M)$ the Lie algebra of vector
fields on $M$. $\mathfrak{X}(M)$ is a $C^\I(M)$-module and 
vector fields acts as derivations on smooth functions, that is the
map
$$D:\mathfrak{X}(M)\times C^\I(M) \to C^\I(M)\;;\;
 (X,f) \mapsto D_X(f):= X(f)
$$
satisfies the equation
$D_X(fg)=D_X(f)g + fD_X(g)$. Moreover the Leibniz rule
$[X,fY]=D_X(f)Y+f[X,Y]$ holds and it follows that
$(C^\I(M),\mathfrak{X}(M))$ is a Lie Rinehart pair.
\end{example}
The Lie structure can be extended into a graded Lie algebra on the
direct sum of the partners, concentrated in degrees zero and one. 
This appears in \cite{GR}:
\begin{definition}[Associated Lie algebra]
Let $(A,\mathfrak{g})$ be a Lie Rinehart pair. Its 
\textbf{associated} (graded) Lie algebra is the direct
sum $A\oplus\mathfrak{g}$ seen as a graded vector space,
with $A$ concentrated in degree zero, $\mathfrak{g}$ concentrated
in degree one and Lie bracket defined by
\begin{equation}
\begin{array}{crcl}
[\cdot,\cdot]: & A\oplus\mathfrak{g} \times A\oplus\mathfrak{g} &\to&
 A\oplus\mathfrak{g}\\
 & \left((a,x),(b,y)\right) &\mapsto& \left(D_x(a)+D_y(b),[x,y]\right)\;.
\end{array}
\end{equation}
\end{definition}
In particular this means, that we can see any Lie Rinehart pair as a 
graded Lie algebra and that it makes sense to talk about (graded)
Lie algebra morphisms in their context. The following proposition 
justifies the definition:
\begin{prop}$(A\oplus\mathfrak{g},[\cdot,\cdot])$ is a graded Lie
algebra.
\end{prop}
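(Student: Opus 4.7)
The plan is to verify the two standard Lie algebra axioms — antisymmetry and the Jacobi identity — for the bracket displayed above; bilinearity over $\R$ is manifest from the formula. By multilinearity it suffices to test both axioms on pure components drawn from either the $A$-part or the $\mathfrak{g}$-part of the direct sum, and in practice the non-trivial content lives in the $A$-component of the output.

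Antisymmetry is immediate in the $\mathfrak{g}$-slot, where the result is $[x,y]$ in $\mathfrak{g}$ and the Lie bracket of $\mathfrak{g}$ is already antisymmetric. In the $A$-slot, swapping $(a,x)\leftrightarrow (b,y)$ interchanges the roles of $D_x$ acting on the first factor with $D_y$ acting on the second, and the expression picks up the required overall sign, so the two components together yield $[(b,y),(a,x)] = -[(a,x),(b,y)]$.

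For the Jacobi identity, one writes out the cyclic sum $\sum_{\mathrm{cyc}}\,\bigl[(a,x),[(b,y),(c,z)]\bigr]$ and splits it into its $\mathfrak{g}$- and $A$-components. The $\mathfrak{g}$-component collapses to $[x,[y,z]]+[y,[z,x]]+[z,[x,y]] = 0$ by the Jacobi identity in $\mathfrak{g}$. Expanding the inner bracket in the $A$-component and regrouping by which of $a,b,c$ each derivation ultimately acts on produces three sub-expressions, each of the form
\[
\bigl(D_u\circ D_v \;-\; D_v\circ D_u \;-\; D_{[u,v]}\bigr)(\,\cdot\,)
\]
for a suitable pair $u,v\in\{x,y,z\}$ applied to one of $a,b,c$. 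Each such block vanishes because $D:\mathfrak{g}\to Der(A)$ is a Lie algebra morphism, which is precisely the datum making $A$ into a $\mathfrak{g}$-module.

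The only real obstacle is careful bookkeeping: keeping track of which $D_u$ is applied to which of $a,b,c$ and verifying that signs line up when the cyclic sum is collected. No appeal is made to the Leibniz rule of the Lie Rinehart pair — only the $\mathfrak{g}$-module structure on $A$ (so that $[D_u,D_v]=D_{[u,v]}$) and the Jacobi identity in $\mathfrak{g}$ itself are used. The grading compatibility (with $A$ in degree zero and $\mathfrak{g}$ in degree one) is then simply read off piece by piece from the defining formula, since the restriction of the bracket to each pair of graded subspaces lands in the correct summand of $A\oplus\mathfrak{g}$.
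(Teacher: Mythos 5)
Your proof verifies the wrong axioms: the bracket on $A\oplus\mathfrak{g}$ is not an ordinary Lie bracket but a \emph{graded} one (with $A$ in degree $0$ and $\mathfrak{g}$ in degree $1$), and the Koszul signs are not mere bookkeeping here --- they change the content of both axioms. Concretely, the mixed bracket is \emph{symmetric}, not antisymmetric: $[(a,0),(0,x)]=(D_x(a),0)=[(0,x),(a,0)]$, so your claimed identity $[(b,y),(a,x)]=-[(a,x),(b,y)]$ fails whenever $D_x(a)\neq 0$ (e.g.\ a vector field acting nontrivially on a function). The required symmetry is the graded one, $[u,v]=(-1)^{|u||v|}[v,u]$, which is what the paper checks; since $(-1)^{|a||x|}=+1$ for a scalar and a vector, no sign appears in the mixed case, while for two vectors the factor $(-1)^{1\cdot 1}=-1$ recovers the antisymmetry of the Lie bracket of $\mathfrak{g}$.

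The same issue breaks your Jacobi argument. For $u=(a,0)$, $v=(0,y)$, $w=(0,z)$ the ungraded cyclic sum gives $\bigl(D_{[y,z]}+D_y\circ D_z+D_z\circ D_y\bigr)(a)$ --- an \emph{anti}commutator, which does not vanish in general (take $A=C^\I(\R)$, $y=z=\partial_t$, $a=t^2$). It is precisely the Koszul sign $(-1)^{|y||z|}=-1$ on one term of the graded Jacobi identity that turns this into $D_{[y,z]}(a)-[D_y,D_z](a)=0$, i.e.\ into the statement that $D$ is a Lie algebra morphism. So your identification of the essential input (only $D_{[u,v]}=[D_u,D_v]$ and the Jacobi identity of $\mathfrak{g}$ are needed, not the Leibniz rule) agrees with the paper, but the signs you dismiss as bookkeeping are exactly where the proof lives; as written, both identities you set out to verify are false for this bracket. (The paper also disposes of the case of two or more scalar arguments at once by a degree count: the double bracket is homogeneous of degree $-2$, so such terms land in negative degree and must vanish.)
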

\begin{proof}
$A\oplus\mathfrak{g}$ is a graded vector space by definition.
To see graded symmetry of the bracket, we only need to 
consider mixed expressions, where we compute  
$[(a,0),(0,x)]=(D_x(a),0)=[(0,x),(a,0)]=
(-1)^{|a||x|}[(x,0),(a,0)]$ on scalars $a\in A$ and vectors
$x\in\mathfrak{g}$.   

To see the graded symmetric Jacobi identity, observe that it has to vanish,
whenever at least two arguments are scalars, since the left side of the identity
is an expression, homogeneous of degree $-2$. If all arguments are vectors, it becomes the usual Jacobi identity of $\mathfrak{g}$ and the
remaining cases are seen from
$D_y\circ D_x-D_x\circ D_y +D_{[x,y]}=0$.
\end{proof}
Morphisms of Lie Rinehart pairs are pairs
of appropriate algebra maps, which interact properly with
respect to the additional module structures \cite{GR}:
\begin{definition}[Lie Rinehart Morphism]
Let $(A,\mathfrak{g})$ and $(B,\mathfrak{h})$ be two Lie Rinehart pairs.
A \textbf{morphism of Lie Rinehart pairs} is a pair of maps $(f,g)$, such
that $f:A\to B$ is a morphism of associative and commutative, real algebras
with unit, $g:\mathfrak{g}\to\mathfrak{h}$ is a morphism of Lie
algebras and the equations
\begin{equation}\label{LR-morph}
\begin{array}{ccc}
g(a\cdot_A x)=f(a)\cdot_B g(x)
& and &
f(D_x(a))=D_{g(x)}(f(a))
\end{array}
\end{equation} 
are satisfied for any $a\in A$ and $x\in \mathfrak{g}$.  
\end{definition}
This is the correct definition of a
morphism in the setting of Lie Rinehart pairs, since all structure is respected
properly:
\begin{corollary}
Let $(f,g):(A,\mathfrak{g})\to(B,\mathfrak{h})$ be a morphism of
Lie Rinehart pairs. The image $(f(A),g(\mathfrak{g}))$ is a
Lie Rinehart pair and 
$(f,g):A\oplus\mathfrak{g}\to B\oplus\mathfrak{h}$ is a morphism of 
graed Lie algebras.
\end{corollary}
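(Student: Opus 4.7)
The plan is to verify both claims by a direct unpacking of the two defining equations in \eqref{LR-morph}, with no genuinely hard step. The structural move that makes everything clean is to equip the image $(f(A), g(\mathfrak{g}))$ with the operations \emph{inherited from} $(B,\mathfrak{h})$ rather than trying to transport the structures forward along $(f,g)$; this bypasses any potential well-definedness problem arising from non-injectivity of $f$ or $g$.

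For the first claim, I would verify the required properties in order. First, $f(A)$ is a commutative unital subalgebra of $B$ because $f$ is an algebra morphism, and $g(\mathfrak{g})$ is a Lie subalgebra of $\mathfrak{h}$ because $g$ is a Lie algebra morphism. Next, I would check that the $B$-module structure on $\mathfrak{h}$ restricts to a map $f(A) \times g(\mathfrak{g}) \to g(\mathfrak{g})$: this is exactly the content of the first equation of \eqref{LR-morph}, $f(a)\cdot_B g(x) = g(a\cdot_A x)$. Symmetrically, the second equation $D_{g(x)}(f(a)) = f(D_x(a))$ says that the derivation action of $\mathfrak{h}$ on $B$ restricts to a map $g(\mathfrak{g}) \to \mathrm{Der}(f(A))$. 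The module axioms on the image and the Leibniz rule are then immediate by applying $f$ or $g$ to the corresponding identities already valid on $(A,\mathfrak{g})$.

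For the second claim, degree preservation of $(f,g)\colon A\oplus\mathfrak{g}\to B\oplus\mathfrak{h}$ is built into the construction. For bracket preservation it suffices, by bilinearity, to compute on homogeneous inputs:
\[
(f,g)\bigl([(a,x),(b,y)]\bigr) = \bigl(f(D_x(a)) + f(D_y(b)),\; g([x,y])\bigr),
\]
\[
\bigl[(f(a),g(x)),\,(f(b),g(y))\bigr] = \bigl(D_{g(x)}(f(a)) + D_{g(y)}(f(b)),\; [g(x),g(y)]\bigr).
\]
These two expressions coincide thanks to the second morphism equation applied in each summand of the first component and the fact that $g$ is a Lie algebra morphism on the second component.

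The main obstacle is therefore not computational but conceptual, and fortunately mild: one has to resist the temptation to define the Lie Rinehart structure on the image by pushing forward from $(A,\mathfrak{g})$ (which would raise well-definedness issues on equivalence classes of preimages) and instead restrict from the target $(B,\mathfrak{h})$. Once this is settled, the remaining verifications are pure book-keeping in the two equations of \eqref{LR-morph}.
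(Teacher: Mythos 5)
Your proposal is correct and takes essentially the same route as the paper: both restrict the structure of $(B,\mathfrak{h})$ to the images, read the two equations of \eqref{LR-morph} as saying that the module actions restrict, and obtain the Leibniz rule by applying $g$ to the Leibniz identity already valid in $(A,\mathfrak{g})$. Your explicit bracket computation for the second claim is exactly the verification the paper compresses into ``a consequence of \eqref{LR-morph}''.
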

\begin{proof} The first structure equation of (\ref{LR-morph}) implies that 
the vector space $g(\mathfrak{g})$ is a $f(A)$-module and the second
that $f(A)$ is a $g(\mathfrak{g})$-module. To see the Leibniz equation, compute
\begin{align*}
&[g(x),f(a)\cdot_B g(y)] = [g(x),g(a\cdot_A y)] = g([x,a\cdot_A y])
=g(D_x(a)\cdot_A y + a \cdot_A[x,y])=\\
&f(D_x(a))\cdot_B g(y) + f(a) \cdot_Bg([x,y])
=D_{g(x)}(f(a))\cdot_B g(y) + f(a) \cdot_B[g(x),g(y)]\;.
\end{align*}
The second part is a consequence of (\ref{LR-morph}).
\end{proof}
\subsection{The Exterior Algebra}
For any $n\in\N$, let $\otimes^n_A\mathfrak{g}$ be the $n$-fold 
tensor product of the $A$-module $\mathfrak{g}$ 
with $\otimes^0_A\mathfrak{g}:=A$. Since $A$ is commutative, 
$\otimes^n_A\mathfrak{g}$ is an $A$-module and we write 
$a\cdot_Ax$ for the $A$-scalar multiplication of any $a\in A$ and 
$x\in\bwedge\mathfrak{g}_A$. Note that in general any tensor can be 
expressed in terms of vectors:
\begin{prop}\label{simple_tensor} 
Let $A$ be an associative and commutative algebra with unit, 
$M$ an $A$-module and $\otimes^n_AM$ the appropriate $n$-fold tensor product. 
Then any $x\in \otimes^n_AM$ is an $A$-linear combination of 
\textbf{simple tensors}, i.e. there is a finite index set $I$, tensors
$x_{i,1}\otimes_A\cdots\otimes_A x_{i,n}\in \otimes^n_AM$ and
scalars $a_i\in A$, such that 
\begin{equation}
x=\textstyle\sum_{i\in I}a_i \cdot_A x_{i,1}\otimes_A\cdots\otimes_A x_{i,n}\;.
\end{equation}
\end{prop}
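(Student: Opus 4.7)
The plan is to proceed by induction on the tensor rank $n$, using the recursive definition $\otimes^{n+1}_A M = (\otimes^n_A M)\otimes_A M$ and the fact that over a commutative ring with unit the $A$-scalar can be absorbed into any tensor slot.

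For the base case $n=1$ we have $\otimes^1_A M = M$, and any $x \in M$ is already a simple tensor written as $1_A \cdot_A x$. (The case $n=0$ is vacuous since $\otimes^0_A M := A$ has no tensor factors, and any $a\in A$ is itself its own $A$-linear combination.)

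For the inductive step, I would use the very construction of the $(n{+}1)$-fold tensor product as the quotient of the free $A$-module on pairs $(y,z) \in \otimes^n_A M \times M$ by the bilinearity relations. By that construction, every element of $\otimes^{n+1}_A M$ is a finite sum $\sum_k b_k\cdot_A (y_k \otimes_A z_k)$ with $b_k\in A$, $y_k \in \otimes^n_A M$ and $z_k \in M$. Applying the inductive hypothesis to each $y_k$, I can expand it as $y_k = \sum_{j} a_{k,j}\cdot_A x_{k,j,1}\otimes_A\cdots\otimes_A x_{k,j,n}$, and $A$-bilinearity of $\otimes_A$ then lets me write
\begin{equation*}
b_k\cdot_A (y_k\otimes_A z_k) = \sum_j (b_k a_{k,j}) \cdot_A x_{k,j,1}\otimes_A\cdots\otimes_A x_{k,j,n}\otimes_A z_k,
\end{equation*}
which is the required form. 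Summing over $k$ finishes the induction.

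There is really no hard step here: the statement is essentially a reformulation of the defining universal property of the tensor product over a commutative unital ring, and the only thing to be careful about is that the $A$-action commutes with insertion into any tensor slot, so that the scalars $b_k a_{k,j}$ can be pulled out uniformly in front. The only minor subtlety worth mentioning is that commutativity of $A$ is what permits the $A$-module structure on $\otimes^n_A M$ to be independent of the slot into which one absorbs the scalar; this is also the reason the author restricts throughout to the commutative case.
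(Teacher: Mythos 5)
Your proof is correct. Note, however, that the paper does not actually prove this proposition at all: its entire ``proof'' is a pointer to the literature (``See for example \cite{MH}''), so there is no argument in the paper to compare against. What you supply is the standard argument that the cited reference would contain: induction on $n$ via the recursive construction $\otimes^{n+1}_A M = (\otimes^n_A M)\otimes_A M$, using that the quotient-of-a-free-module construction forces every element to be a finite sum of elementary tensors $y_k\otimes_A z_k$, and then expanding each $y_k$ by the inductive hypothesis and absorbing scalars by $A$-bilinearity. This is sound, and your closing remark correctly identifies the role of commutativity (it is what makes $\otimes^n_A M$ an $A$-module with a slot-independent action, a point the paper itself emphasizes when it restricts to commutative $A$). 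One small observation you could add: since $a\cdot_A(x_1\otimes_A\cdots\otimes_A x_n)=(a\cdot_A x_1)\otimes_A\cdots\otimes_A x_n$ is again a simple tensor, the scalars $a_i$ in the statement are actually redundant --- every element is already a plain finite sum of simple tensors --- which is exactly the strengthening the paper invokes later for the exterior algebra.
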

\begin{proof}See for example (\cite{MH}).
\end{proof}
We call such a sum an $A$-\textbf{linear combination}. However in general these 
$A$-linear combinations are not unique. 

Back on Lie Rinehart pairs $(A,\mathfrak{g})$, 
the \textbf{tensor algebra} of the $A$-module $\mathfrak{g}$ is the 
direct sum of all $n$-fold $A$-tensor products
$$
T_A\mathfrak{g}:=\textstyle\bigoplus_{n=0}^\I\bigotimes^n_A\mathfrak{g}\;,
$$
together with an associative but not commutative multiplication given by 
concatenation of tensors 
$\otimes_A: T_A\mathfrak{g} \times T_A\mathfrak{g} \to
T_A\mathfrak{g}\;;\; (x,y)\mapsto x\otimes_A y$. 
This product has a unit $1_A\in \otimes^0_A\mathfrak{g}\simeq A$.

As usual we get the exterior power as the quotient of the tensor power
and the submodule generated by all simples tensors with 'repeated vector products':
\begin{definition}[Exterior Algebra] For any Lie Rinehart pair $(A,\mathfrak{g})$ 
and $n\in\NN$, let  $\bwedge^n_A\mathfrak{g}:=\otimes^n_A\mathfrak{g}/J^n$ be
the quotient module of the $n$-th tensor product and the submodule $J^n$, 
spanned by all $x_1\otimes\cdots\otimes x_n$ with 
$x_i = x_j$ for some $i = j$. Then the direct sum 
\begin{equation}
\bwedge\mathfrak{g}_A:=\textstyle\bigoplus_{n=0}^\I\bwedge^n_A\mathfrak{g}
\end{equation}
together with the quotient 
$\smwedge: \bwedge\mathfrak{g}_A \times \bwedge\mathfrak{g}_A \mapsto
\bwedge\mathfrak{g}_A\;;\; (x,y)\mapsto x\smwedge y$ of the
$A$-tensor multiplication, is called the \textbf{exterior algebra} of 
$(A,\mathfrak{g})$ and the product is called the \textbf{exterior product}. 
\end{definition} 
We write $x_1\smwedge\cdots\smwedge x_n\in\bwedge^n_A\mathfrak{g}$ for the
coset of any tensor $x_1\otimes\cdots\otimes x_n\in \otimes^n_A\mathfrak{g}$ and
in particular $\bwedge^0_A\mathfrak{g}\simeq A$ and 
$\bwedge^1_A\mathfrak{g}\simeq \mathfrak{g}$, since $J^0=\{0\}$ and $J^1=\{0\}$.
If at least one factor in an exterior product is of 
tensor degree zero, we sometimes write $a\cdot_A x$ instead of $a\smwedge x$, 
to stress that the exterior product is just $A$-scalar multiplication in 
that case.
\begin{example}
If $(C^\I(M),\mathfrak{X}(M))$ is the Lie Rinehart pair of smooth 
functions and vector field, the exterior algebra 
$\bwedge\mathfrak{X}(M)_{C^\I(M)}$ is the algebra of
\textbf{multivector fields}.
\end{example}
The exterior algebra is an $A$-module and 
any exterior tensor can be written as a sum (not just a linear combination) of 
simple exterior tensors:
\begin{prop}Let $(A,\mathfrak{g})$ be a Lie Rinehart pair and 
$\bwedge\mathfrak{g}_A$ its exterior algebra. Then
$\bwedge\mathfrak{g}_A$ is an $A$-module and any tensor 
$x\in \bwedge\mathfrak{g}_A$ is an $A$-linear combination of simple exterior products. 
In particular there is a finite index set $I$, scalars $a_i\in A$ and 
simple exterior tensors 
$x_{i,1}\smwedge\cdots\smwedge x_{i,n_i}\in\bwedge\mathfrak{g}_A$, such that
\begin{equation}
x= \textstyle\sum_{i\in I}a_i\cdot x_{i,1}\smwedge\cdots\smwedge x_{i,n_i}\;.
\end{equation}
Moreover any tensor $x\in\bwedge\mathfrak{g}_A$ is a finite sum 
(not just a linear combination) of simple tensors, that is there is a finite index set $I$ and simple exterior tensors 
$x_{i,1}\smwedge\cdots\smwedge x_{i,n_i}\in\bwedge\mathfrak{g}_A$, such that
\begin{equation}
x= \textstyle\sum_{i\in I}x_{i,1}\smwedge\cdots\smwedge x_{i,n_i}\;.
\end{equation}
\end{prop}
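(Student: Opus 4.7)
The statement has three parts: that $\bwedge\mathfrak{g}_A$ is an $A$-module, that every element is an $A$-linear combination of simple exterior products, and the sharper fact that every element is actually a plain finite sum of simple exterior products. I would treat them in that order.

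First, for the $A$-module structure, I would observe that each tensor power $\otimes_A^n\mathfrak{g}$ is an $A$-module by construction and that the submodule $J^n$ is $A$-stable: if $x_1\otimes_A\cdots\otimes_A x_n$ has a repeated factor then so does $a\cdot_A(x_1\otimes_A\cdots\otimes_A x_n)=(a\cdot_A x_1)\otimes_A x_2\otimes_A\cdots\otimes_A x_n$, hence the quotient $\bwedge^n_A\mathfrak{g}$ inherits an $A$-action, and the direct sum $\bwedge\mathfrak{g}_A$ is an $A$-module. For the linear combination claim, I would apply Proposition \ref{simple_tensor} to each homogeneous component $x^{(n)}\in\otimes^n_A\mathfrak{g}$ of a representative of $x$, write each as a finite $A$-linear combination of simple tensors, and then push the resulting expression through the quotient map $\otimes^n_A\mathfrak{g}\twoheadrightarrow\bwedge^n_A\mathfrak{g}$, which by definition sends simple tensors to simple exterior products.

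The main point is the final claim, that the scalar coefficients can be absorbed. Here the idea is that because the tensor product is taken over $A$, scalar multiplication can be moved into any single factor of a simple exterior product. Starting from an $A$-linear combination
\begin{equation*}
x=\textstyle\sum_{i\in I}a_i\cdot_A\left(x_{i,1}\smwedge\cdots\smwedge x_{i,n_i}\right)
\end{equation*}
produced by the previous step, I would treat the summands with $n_i\geq 1$ by rewriting
\begin{equation*}
a_i\cdot_A\left(x_{i,1}\smwedge\cdots\smwedge x_{i,n_i}\right)=\left(a_i\cdot_A x_{i,1}\right)\smwedge x_{i,2}\smwedge\cdots\smwedge x_{i,n_i},
\end{equation*}
which holds since the exterior product is $A$-linear in each argument (being the quotient of the $A$-tensor product). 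The result is again a simple exterior tensor, now with no external scalar. The remaining summands with $n_i=0$ lie in $\bwedge^0_A\mathfrak{g}\simeq A$, so $a_i\cdot_A x_{i}$ is itself an element of $A$ and hence already a simple exterior tensor of degree zero.

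The only minor obstacle is making sure this rewriting is legal in the quotient, which is precisely guaranteed by having formed all tensor products over $A$; over a non-commutative base one could not freely absorb scalars, which is why the commutativity hypothesis on $A$ was imposed from the outset. Collecting these rewritten terms yields the desired expression of $x$ as a finite sum (not merely an $A$-linear combination) of simple exterior tensors.
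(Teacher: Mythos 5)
Your proposal is correct and follows essentially the same route as the paper: inherit the $A$-module structure from the tensor algebra via the quotient, obtain the $A$-linear combination from Proposition \ref{simple_tensor} by passing simple tensors to their cosets, and then absorb each scalar into the first factor using $a\cdot x_{1}\smwedge\cdots\smwedge x_{n}=(a\cdot_A x_{1})\smwedge\cdots\smwedge x_{n}$. You merely supply a little more detail than the paper does (the $A$-stability of the submodule $J^n$ and the degree-zero summands), which is fine.
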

\begin{proof}Since $T_A\mathfrak{g}$ is an $A$-module, so is $\bwedge\mathfrak{g}_A$. 
For the first equation, observe that
by prop (\ref{simple_tensor}) 
the $A$-tensor algebra $T_A\mathfrak{g}$ is spanned by simple
tensors $x_1\otimes_A\cdots\otimes_A x_n$. It follows that the quotient 
$\bwedge\mathfrak{g}_A$ is spanned by the appropriate cosets 
$x_1\smwedge\cdots\smwedge x_n$.

To see the second equation apply 
$a\cdot x_1\smwedge\cdots\smwedge x_n = 
(a\cdot_A x_1)\smwedge\cdots\smwedge x_n$ to the first one.
\end{proof}
Morphisms of Lie Rinehart pairs prolong naturally to morphisms of
exterior algebras defined as the direct sum of the scalar part and 
the exterior tensor power of the Lie algebra part. The compatibility 
conditions (\ref{LR-morph}) then guarantee that this map is well defined 
as a morphism of exterior algebras over different scalars.
\begin{definition}(Associated Morphism)
Let $(f,g):(A,\mathfrak{g})\to (B,\mathfrak{h})$ be a morphism of Lie Rinehart pairs. 
The map 
\begin{equation}
\bwedge g_f:\bwedge\mathfrak{g}_A\to\bwedge\mathfrak{h}_B
\end{equation} defined 
on scalars $a\in\bbwedge{0}\mathfrak{g}_A$ by
$f(a)$ and on simple tensors 
$x_1\smwedge_A\cdots\smwedge_A x_n\in\bbwedge{n}\mathfrak{g}_A$ by
$g(x_1)\smwedge_B\cdots\smwedge_B g(x_n)$ and then extended to all of
$\bwedge\mathfrak{g}_A$ by $A$-additivity, is called 
the \textbf{associated morphism} of $(f,g)$.
\end{definition} 
The following proposition shows that associated morphisms are well defined
as morphisms of exterior algebras over different scalars and that the construction
is natural:
\begin{prop}
Let $(f,g):(A,\mathfrak{g})\to (B,\mathfrak{h})$ be a morphism of Lie Rinehart pairs.
Its associated morphism $\bwedge g_f$ is a well defined morphism of
exterior algebras over modules of different rings and in particular the equations
$$
\begin{array}{ccc}
\bwedge g_f((a\cdot_A x)\ssmwedge_A y)= \bwedge g_f(x\ssmwedge_A (a\cdot_A y)) & and &
\bwedge g_f(x \ssmwedge_A y)= \bwedge g_f(x)\ssmwedge_B \bwedge g_f(y)
\end{array}
$$ are satisfied for all $a\in A$ and $x,y\in\bwedge\mathfrak{g}_A$.
If $(h,i):(B,\mathfrak{h})\to (C,\mathfrak{i})$ is another morphism of 
Lie Rinehart pairs then 
$$
\bwedge i_h \circ \bwedge g_f = \bwedge (i\circ g)_{h\circ f}\;.
$$
\end{prop}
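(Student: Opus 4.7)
The plan is to establish well-definedness of $\bwedge g_f$ first, since both displayed equations and the functoriality statement will follow in a routine way from the construction. On a simple tensor the definition reads $\bwedge g_f(x_1 \smwedge_A \cdots \smwedge_A x_n) = g(x_1) \smwedge_B \cdots \smwedge_B g(x_n)$, and the nontrivial check is that two representations of the same element of $\bwedge^n_A \mathfrak{g}$ produce the same image in $\bwedge^n_B \mathfrak{h}$. I would treat this as a factorization through two universal constructions: first the $A$-balanced tensor product $\otimes^n_A \mathfrak{g}$, and then the quotient by $J^n$.

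To pass through $\otimes^n_A \mathfrak{g}$, I regard $\otimes^n_B \mathfrak{h}$ as an $A$-module via $f$ and verify that $(x_1, \ldots, x_n) \mapsto g(x_1) \otimes_B \cdots \otimes_B g(x_n)$ is $\R$-multilinear and compatible with the $A$-balancing relations. The critical identity is
$$
g(a \cdot_A x_i) \otimes_B g(x_j) = (f(a) \cdot_B g(x_i)) \otimes_B g(x_j) = g(x_i) \otimes_B (f(a) \cdot_B g(x_j)) = g(x_i) \otimes_B g(a \cdot_A x_j),
$$
which uses the first compatibility equation of (\ref{LR-morph}) together with $B$-balancing of the target. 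The universal property then yields a factorization through $\otimes^n_A \mathfrak{g}$. Passing to the quotient is immediate: if some $x_i = x_j$, then $g(x_i) = g(x_j)$, so the image lies in the analogous ideal of $\otimes^n_B \mathfrak{h}$ and vanishes in $\bwedge^n_B \mathfrak{h}$. The extension to all of $\bwedge \mathfrak{g}_A$ by $A$-additivity (with the $A$-action on the target read through $f$) is then forced on us.

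Once $\bwedge g_f$ is well-defined, the balancing equation $\bwedge g_f((a \cdot_A x) \smwedge_A y) = \bwedge g_f(x \smwedge_A (a \cdot_A y))$ is automatic, because $(a \cdot_A x) \smwedge_A y$ and $x \smwedge_A (a \cdot_A y)$ are already equal in $\bwedge \mathfrak{g}_A$ itself. The multiplicative identity reduces by $A$-additivity to the simple-tensor case, where both sides equal $g(x_1) \smwedge_B \cdots \smwedge_B g(x_{n+m})$ by the definition of $\bwedge g_f$ on a simple tensor. The functoriality statement $\bwedge i_h \circ \bwedge g_f = \bwedge (i \circ g)_{h \circ f}$ likewise reduces to simple tensors and scalars, where it is the factor-by-factor identity $i(g(x_k)) = (i \circ g)(x_k)$ together with $h(f(a)) = (h \circ f)(a)$, extended by $A$-additivity.

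The main obstacle is the $A$-balancing step in well-definedness, since the map goes between tensor products over genuinely different ground rings $A$ and $B$; it is precisely the structural equation $g(a \cdot_A x) = f(a) \cdot_B g(x)$ from the definition of a Lie Rinehart morphism that allows us to convert an $A$-balancing relation on the source into a $B$-balancing relation on the target. Everything after this secured is a matter of unwinding the definition of $\bwedge g_f$ on simple tensors.
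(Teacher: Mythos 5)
Your argument is correct and rests on the same key fact as the paper's proof --- the first structure equation of (\ref{LR-morph}), $g(a\cdot_A x)=f(a)\cdot_B g(x)$ --- but you package it differently and more completely. The paper's proof is a one-line appeal to the naturality of the exterior power; the only genuinely nontrivial point, namely that the formula on simple tensors descends to a well-defined map on $\bwedge^n_A\mathfrak{g}$ even though source and target are tensor products over \emph{different} rings, is left implicit (a commented-out draft in the source checks the balancing relation directly on generators by moving $a$ across the factors). You instead regard $\otimes^n_B\mathfrak{h}$ as an $A$-module by restriction of scalars along $f$, verify $A$-multilinearity of $(x_1,\ldots,x_n)\mapsto g(x_1)\otimes_B\cdots\otimes_B g(x_n)$, and invoke the universal property of $\otimes^n_A\mathfrak{g}$ before passing to the quotient by $J^n$. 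That supplies actual rigor at the one place where it is needed, and it has the pleasant side effect of making the first displayed equation of the proposition tautological, since $(a\cdot_A x)\ssmwedge_A y$ and $x\ssmwedge_A(a\cdot_A y)$ are already equal in $\bwedge\mathfrak{g}_A$; this correctly identifies that equation as a restatement of well-definedness rather than an independent claim. The remaining verifications --- multiplicativity on simple tensors and factor-by-factor functoriality via $i\circ g$ and $h\circ f$ --- coincide with what the paper intends, so there is no gap; your route is simply the careful version of the paper's sketch.
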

\begin{proof}Follows from (\ref{LR-morph}), since the exterior product 
$\bwedge g$ is natural.
\end{proof}
Exterior algebras are equipped with a 
$\Z$-grading coming from the tensor degree, which we need
in the definition of a \textit{graded symmetric} Lie $\I$-algebra later on.
In addition we need a 'reduced' grading to understand the symmetry
of the traditional Schouten-Nijenhuis bracket: 
\begin{definition}(Gradings)
Let $(A,\mathfrak{g})$ be a Lie Rinehart pair and $\bwedge\mathfrak{g}_A$ its
exterior algebra. An element $x\in\bwedge^n_A\mathfrak{g}$ is called
\textbf{homogeneous} and the integer $n$ is called the
\textbf{tensor} degree of $x$, written as 
\begin{equation}
|x|:=n\;.
\end{equation}
In addition the \textbf{antisymmetric} degree of
a homogeneous element $x\in\bbwedge{n}\mathfrak{g}_A$ is the tensor degree, but reduced by one and written as
\begin{equation}\label{antisymm_grading}
deg(x):=n-1\;.
\end{equation}
\end{definition}
Calling the reduced grading 'antisymmetric', will become clear in the next
section, since the traditional Schouten-Nijenhuis bracket behaves 
antisymmetric, with respect to this grading. The reader familiar with 
$\Z$-graded abelian groups will further notice, that the term is the 
correct one, related to the transition between \textit{symmetric} and 
\textit{antisymmetric} \cite{FM}.

With respect to the tensor grading the exterior algebra is concentrated
in non negative degrees and tensors of degree zero are 
precisely the scalars $a\in A$.

Note that the exterior product is mostly seen as some kind of antisymmetric operation,
but since 
\begin{equation}
x\smwedge y = (-1)^{|x||y|}y\smwedge x
\end{equation}
holds on homogeneous tensors, 
it is in fact a \textbf{graded symmetric} product with respect to the tensor grading,
while it has no symmetry at all with respect to the antisymmetric grading.
Moreover it is a graded bilinear map, homogeneous of degree zero with
respect to the tensor grading and homogeneous of degree one with respect to
the antisymmetric grading.
\subsection{The nonexistence of a natural codifferential}
Every graded Lie algebra comes with a (co)differential 
on its reduced symmetric tensor coalgebra \cite{MM} and
considering an ordinary Lie algebra as $\Z$-graded
but concentrated in degree one only, we get a (co)differential on its
exterior power.

Taking this into account, one would guess that such a 
(co)differential is defined for any Lie Rinehart pair, but as we will
see the technique does not apply here anymore. In fact 
on the archetypical example of smooth functions and
vector fields, we can not naturally define a (co)differential
on the exterior algebra other than the zero operator.

This is not a contradiction to the previous mentioned coalgebraic 
approach, since we have to deal with the additional $A$-module structure,
which happens to be trivial on the Lie Rinehart pair $(\R,\mathfrak{g})$ of a Lie algebra.

The following theorem should be seen as a counterexample, giving a Lie Rinehart
pair, without a non trivial (co)differential on its exterior algebra:
\newpage
\begin{theorem}
Let $M$ be a Hausdorff, separable, finite dimensional and
differentiable manifold, $(C^\I(M),\mathfrak{X}(M))$ the Lie Rinehart pair of smooth
functions and vector fields on $M$ and $\bwedge\mathfrak{X}(M)_{C^\I(M)}$
its exterior algebra of \textit{multivector fields}. Then the only
naturally defined (co)differential 
$$
d:\bwedge{}\mathfrak{X}(M)_{C^\I(M)}\to \bwedge{}\mathfrak{X}(M)_{C^\I(M)},
$$
homogeneous of degree $-1$ with respect to the tensor grading, is the zero operator.
\end{theorem}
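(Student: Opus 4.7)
The plan is to reduce the problem to a single $C^\infty(M)$-bilinear datum and then eliminate it by a representation-theoretic argument applied pointwise.

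I begin by noting that, by the usual characterization of coderivations of the cofree cocommutative coalgebra, a $C^\infty(M)$-linear coderivation $d$ of degree $-1$ on $\bwedge\mathfrak{X}(M)_{C^\I(M)}$ is uniquely determined by its corestrictions to the cogenerators $\mathfrak{X}(M) = \bwedge^1$. A degree count shows that only the component $l_2 := \pi_1 \circ d|_{\bwedge^2\mathfrak{X}(M)} : \bwedge^2\mathfrak{X}(M) \to \mathfrak{X}(M)$ can survive: for $n \geq 3$ the shift of $-1$ forces $\pi_1\circ d|_{\bwedge^n}$ to land outside $\bwedge^1$, and the candidate $l_1 : \mathfrak{X}(M) \to C^\I(M)$ is killed automatically by the coderivation identity, because $\Delta(X) = X\otimes 1 + 1\otimes X$ demands $\Delta(l_1(X)) = l_1(X)\otimes 1 + 1\otimes l_1(X)$, while $\Delta(a) = a\otimes 1$ for $a \in \bwedge^0 = C^\I(M)$, forcing $l_1(X) = 0$. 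Hence $d$ is entirely encoded in the alternating, $C^\I(M)$-bilinear map $l_2$, equivalently in a smooth tensor field $T \in \Gamma(\bwedge^2 T^*M \otimes TM)$.

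I then interpret the "naturally defined" hypothesis as invariance under every Lie Rinehart automorphism of $(C^\I(M),\mathfrak{X}(M))$, which by the standard reconstruction theorem are exactly the pullbacks along diffeomorphisms of $M$. So $T$ is a diffeomorphism-invariant tensor field on $M$. Under the assumed niceness of $M$, at each point $p \in M$ every element of $\mathrm{GL}(T_pM)$ is realized as the differential at $p$ of a global diffeomorphism of $M$ fixing $p$, by a standard local-to-global construction in normal coordinates using a bump function. Consequently the symbol $T_p : \bwedge^2 T_pM \to T_pM$ must be $\mathrm{GL}(T_pM)$-equivariant, and since $\bwedge^2 T_pM$ and $T_pM$ are inequivalent irreducible representations of $\mathrm{GL}(T_pM)$ (for $\dim M \geq 2$; the low-dimensional cases are trivial), Schur's lemma forces $T_p = 0$. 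As this holds at every $p$, $T = 0$, so $l_2 = 0$, and therefore $d = 0$.

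The main obstacle I expect is the careful setup of the first step, namely the coderivation calculus over the base ring $C^\I(M)$ rather than a field. The decisive subtlety is that the coproduct of a scalar $a \in C^\I(M)$ is $a \otimes 1$ and not $a\otimes 1 + 1\otimes a$; this is exactly the feature of the $C^\I(M)$-coalgebra that precludes an $l_1$ contribution, and conceptually it reflects why the naive Chevalley--Eilenberg construction cannot survive the passage from Lie algebras to Lie Rinehart pairs (in the notation of the excerpt, the obstruction $[fX,Y] = D_X(f)\cdot_A Y + f[X,Y]$ is not $C^\I(M)$-bilinear). Once the reduction to $l_2$ is in hand, the remainder is a combination of standard differential topology and elementary representation theory of $\mathrm{GL}(n)$.
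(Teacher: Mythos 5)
There is a genuine gap, and it sits at the very first step: you assume that $d$ is $C^\infty(M)$-linear, and everything downstream (the reduction to a single cogenerator component $l_2$, and the identification of $l_2$ with a tensor field $T\in\Gamma(\Lambda^2T^*M\otimes TM)$) depends on that assumption. But $C^\infty(M)$-linearity is not part of the hypothesis, and it cannot be: the whole point of the theorem is to exclude the candidates that are \emph{not} tensorial, namely natural \emph{differential} operators of positive order such as the bracket-type map $X\wedge Y\mapsto [X,Y]$ that one would copy from the Chevalley--Eilenberg codifferential of a Lie algebra. (That particular map fails to descend to $\bigwedge^2_{C^\infty(M)}\mathfrak{X}(M)$, but one must prove that no higher-order operator on any $\bigwedge^n$ is natural either.) The paper's proof does exactly this: it invokes the Kol\'a\v{r}--Michor--Slov\'ak correspondence between natural operators of order $r$ and $G^{r+1}_m$-equivariant maps on jet spaces, restricts to the scalar matrices $\lambda\,\mathrm{id}\in GL(m)$, and kills all orders at once with the homogeneous function theorem, because $nd_1+(n+1)d_2+\cdots+(n+r)d_r=n-1$ has no solution in nonnegative integers. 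Your Schur-lemma step is precisely the order-zero case $r=0$ of that homogeneity argument; the orders $r\geq 1$ are not addressed in your write-up. Relatedly, your notion of naturality (equivariance under automorphisms of the single pair $(C^\infty(M),\mathfrak{X}(M))$) does not by itself give locality, so without the linearity assumption you could not even reduce to a pointwise symbol.

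Two smaller points. First, the claim that every element of $GL(T_pM)$ is the differential at $p$ of a global diffeomorphism fixing $p$ is false in general: a diffeomorphism supported in a chart is isotopic to the identity and hence realizes only $GL^+(T_pM)$, and there are manifolds (e.g.\ $\mathbb{CP}^2$) admitting no orientation-reversing diffeomorphism at all. This is harmless for your argument, since the positive scalars already separate $\Lambda^2T_pM$ from $T_pM$ (they act by $\lambda^2$ versus $\lambda$), which is again exactly the paper's scaling computation. Second, the coderivation calculus over the ring $C^\infty(M)$ that you gesture at would need care, but it is also unnecessary: the paper proves the stronger statement that \emph{every} natural map $\bigwedge^n\mathfrak{X}(M)_{C^\infty(M)}\to\bigwedge^{n-1}\mathfrak{X}(M)_{C^\infty(M)}$ vanishes, with no coderivation or square-zero hypothesis, so the reduction to $l_2$ buys you nothing.
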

\begin{proof} We look at a slightly more general situation and proof that 
for any $n\in\N$ there is no naturally defined map 
$d:\bbwedge{n}\mathfrak{X}(M)_{C^\I(M)}\to \bbwedge{n-1}\mathfrak{X}(M)_{C^\I(M)}$
other than the zero map at all. 
Since the codifferential is homogeneous of degree $-1$, this 
will include the situation of the theorem.

As we require this map to be a \textit{natural operator}, we
can use the technique of \cite{KMS}. According to theorem 14.18 in \cite{KMS}, 
those operators of order $r$ are in one to one correspondence with 
maps $f:T^{r+1}_m(\bbwedge{n}\R^m)\to \bbwedge{n-1}\R^m$, equivariant with
respect to the associated action of the jet group $G_m^{r+1}$ on domain and
codomain for any $m\in\N$.
From \cite{KMS} prop 14.20 we know further
$$
T^r_m(\bbwedge{n}\R^m)\simeq 
(\bbwedge{n}\R^m)\oplus(\bbwedge{n}\R^m\otimes\R^{m*})\oplus\cdots\oplus
(\bbwedge{n}\R^m\otimes S^{r}\R^{m*})
$$
and write 
$x^{j_1,\ldots,j_n}$, $x^{j_1\ldots,j_n,}{}_{j_{n+1}}$,
... , $x^{j_1,\ldots,j_n,}{}_{j_{n+1},\ldots,j_{n+r}}$ for the 
local coordinates of $T^r_m(\bbwedge{n}\R^m)$, that are the prolongations of the 
canonical coordinates $x^j$ in $\R^m$. In particular any such expression is
antisymmetric in all upper and symmetric in all lower indices.

The required actions $l^{r+1}$ of $G^{r+1}_m$ on $T^r_m(\bbwedge{n}\R^m)$ are pretty
involved, but fortunately we only need the actions of the general linear
group $Gl(m)$, seen as a subset of the $(r+1)$-th order jet group 
$G^{r+1}_m$. According to \cite{KMS} proposition 14.20 this restricted action is 
purely tensorial and given by
$\tilde{x}^{j_1,\ldots,j_n}= x^{i_1,\ldots,i_n}
 a^{j_1}{}_{i_1}\cdots a^{j_n}{}_{i_n}$,
$\tilde{x}^{j_1,\ldots,j_n,}{}_{j_{n+1}}= 
x^{i_1,\ldots,i_n,}{}_{i_{n+1}}a^{j_1}{}_{i_1}\cdots 
 a^{j_n}{}_{i_n}a^{i_{n+1}}{}_{j_{n+1}}
$ $,\ldots,$\\
$
\tilde{x}^{j_1,\ldots,j_n,}{}_{j_{n+1},\ldots, j_{n+r}}= 
x^{i_1,\ldots,i_n,}{}_{i_{n+1},\ldots,i_{n+r}}
 a^{j_n}{}_{i_n}\cdots a^{j_n}{}_{i_n}a^{i_{n+1}}{}_{j_{n+1}}\cdots 
  a^{i_{n+r}}{}_{j_{n+r}}
$
and the restricted action on $\bwedge^{n-1}\R^m$ is tensorial, too.

Now choose some real $\lambda>0$ and consider the general linear 
transformations $a^j{}_i\in Gl(m)$, defined by 
$a^i{}_i=\lambda$ and $a^j{}_i=0$ for $i\neq j$. The action on 
$\bwedge^{n-1}\R^m$ and $T^r_m(\bbwedge{n}\R^m)$ is  particularly easy to compute and given by
$l^{r+1}(a^j{}_i;x^{j_1,\ldots,j_n})=\lambda^{n-1}x^{j_1,\ldots,j_n}$ and
\begin{multline*}
l^{r+1}(a^j{}_i;x^{j_1,\ldots,j_n},x^{j_1,\ldots,j_n,}{}_{j_{n+1}},\ldots,
x^{j_1,\ldots,j_n,}{}_{j_{n+1},\ldots,j_{n+r}})=\\
\left(\lambda^nx^{j_1,\ldots,j_n},\lambda^{n+1} x^{j_1,\ldots,j_n,}{}_{j_{n+1}},\ldots,
\lambda^{n+r} x^{j_1,\ldots,j_n,}{}_{j_{n+1},\ldots,j_{n+r}}\right)\;.
\end{multline*}
It follows that any map $f:T^r_m(\bbwedge{n}\R^m)\to \bbwedge{n-1}\R^m$,
equivariant with respect to the action $l^{r+1}$
has to satisfy the homogenity condition
\begin{multline*}
\lambda^{n-1}f\left(x^{j_1,\ldots,j_n},x^{j_1,\ldots,j_n,}{}_{j_{n+1}},\ldots,
x^{j_1,\ldots,j_n,}{}_{j_{n+1},\ldots,j_{n+r}}\right)=\\
f\left(\lambda^n x^{j_1,\ldots,j_n},\lambda^{n+1} x^{j_1,\ldots,j_n,}{}_{j_{n+1}},
\ldots,\lambda^{n+r} x^{j_1,\ldots,j_n,}{}_{j_{n+1},\ldots,j_{n+r}}\right)
\end{multline*}
for all real $\lambda>0$, but by the homogeneous function theorem \cite{KMS} (24.1)
such a map (other than the zero morphism) only exists if the equation
$$n d_1 + (n+1)d_2 \cdots + (n+r)d_r = n-1$$ has solutions 
$d_1,\ldots,d_r\in \N$, which it hasn't. It follows that the zero map is the
only equivariant function and consequently the only natural operator is the zero operator. 
\end{proof}
In \cite{GR} Rinehart defined a (co)differential for any Lie Rinehart pair,
but on the tensor product of the exterior algebra and the universal enveloping
algebra of the Lie algebra instead. In case of smooth functions and 
vector fields this gives a structure dual to the usual De Rham complex of
differential forms. 
\begin{remark} On the Lie Rinehart pair $(\R,\mathfrak{g})$ of 
a Lie algebra with its trivial $\mathfrak{g}$-module structure
on $\R$, a (co)differential
$d:\bwedge\mathfrak{g} \to \bwedge\mathfrak{g}$
is defined by $d(\lambda)=0$ on scalars $\lambda\in \R$ as well as $d(x)=0$
on vectors $x\in\mathfrak{g}$ and by
$$d(x_1\smwedge\cdots\smwedge x_n):=
\textstyle\sum_{s\in Sh(2,n-2)}e(s)\,
  [x_{s(1)},x_{s(2)}]\smwedge x_{s(3)}\smwedge\cdots\smwedge x_{s(n)}   
$$
on simple tensors
$x_1\smwedge\cdots\smwedge x_n\in \bwedge \mathfrak{g}$
and is then extend to 
all of $\bwedge\mathfrak{g}$ by linearity. Except for the degree
zero part (which is trivial) 
this is the 'coalgebraic' (co)differential as it appears for example in \cite{MM}.
\end{remark}
If we try to define a similar map on the exterior power of an
arbitrary Lie Rinehart pair, the operator is not necessarily well defined
with respect to the additional module structure and we could face situations
like 
$$
d((a\cdot x_1)\smwedge x_2)\neq
d(x_1\smwedge(a\cdot x_2))\;.
$$
\subsection{The Schouten-Nijenhuis bracket}
The Lie bracket on vector fields can be extended to a
graded Lie bracket on \textit{multivector fields}, usually called 
Schouten-Nijenhuis bracket \cite{CM},\cite{PM}. We show that this can be generalized 
verbatim to arbitrary Lie Rinehart pairs.
\begin{definition}[Schouten-Nijenhuis bracket]
Let $(A,\mathfrak{g})$ be a Lie Rinehart pair and $\bwedge\mathfrak{g}_A$ its
exterior algebra. The map
\begin{equation}
\left[\cdot\;,\cdot\right]: \bwedge\mathfrak{g}_A \times \bwedge\mathfrak{g}_A \to
 \bwedge\mathfrak{g}_A\;,
\end{equation}
defined by $[a,b]=0$ as well as $[x,a]=[a,x]=D_x(a)$ on scalars $a,b\in A$ and 
vectors $x\in\mathfrak{g}$ and by
\begin{multline}\label{SN_1}
[x_1\smwedge\cdots \smwedge x_n,y_1\smwedge\cdots\smwedge y_m]=\\
	\textstyle\sum_{i,j}(-1)^{i+j}[x_i,y_j]\smwedge x_1\smwedge\cdots\smwedge 
		\widehat{x_i}\smwedge\cdots\smwedge x_n\smwedge
			y_1\smwedge\cdots\smwedge \widehat{y_j}\smwedge\cdots\smwedge y_m
\end{multline}
on simple tensors 
$x_{1}\smwedge\cdots\smwedge x_{n}$, 
$y_{1}\smwedge\cdots\smwedge y_{m}\in\bwedge\mathfrak{g}_A$ 
and then extend to all of $\bwedge\mathfrak{g}_A$ by $A$-additivity, is called the
\textbf{(antisymmetric) Schouten Nijenhuis bracket} of $(A,\mathfrak{g})$.
\end{definition}
This is the traditional definition as it appears for example in \cite{CM}. 
In case of simple tensors an equivalent but more symmetric expression 
for (\ref{SN_1}) is given by
$$\label{SN_2}
\textstyle\sum_{s\in Sh(1,n-1)}\sum_{t\in Sh(1,m-1)}
 e(s)e(t)[x_{s_1},y_{t_1}]\smwedge x_{s_2}
  \smwedge\cdots\smwedge x_{s_n}\smwedge y_{t_2}\smwedge\cdots\smwedge y_{t_m}\;.
$$
Care has to be taken, to get the symmetry of the Schouten-Nijenhuis bracket right. 
In fact we have to consider the antisymmetric grading (\ref{antisymm_grading}) of 
$\bwedge\mathfrak{g}_A$ to understand it properly. With this grading 
the common commutation equation
$$
[x,y]=-(-1)^{(|x|-1)(|y|-1)}[y,x]
$$
suddenly becomes more conceptual and just says that the Schouten-Nijenhuis bracket is 
\textbf{graded antisymmetric} with respect to the \textit{antisymmetric grading}.
Later we have to deal with a graded symmetric incarnation of the bracket and
that's why we call this one the \textit{antisymmetric} bracket.

Proofing its properties has been done in the situation of multivector fields
at many places before \cite{CM}, \cite{PM} and we only recapitulate the basic facts
for completeness:
\begin{theorem}Let $(A,\mathfrak{g})$ be a Lie Rinehart pair with
exterior algebra $\bwedge\mathfrak{g}_A$. The 
Schouten-Nijenhuis bracket $[.,.]$ is a $\R$-bilinear, graded antisymmetric operator,
homogeneous of degree zero with respect to the
antisymmetric grading and in particular the equation
$$
\begin{array}{cc}
[x,y]=-(-1)^{deg(x)deg(y)}[y,x],&
[x,y\smwedge z] = [x,y]\smwedge z + (-1)^{deg(x)(deg(y)-1)}y\smwedge [x,z]
\end{array}
$$
as well as the graded Jacobi equation in its antisymmetric incarnation
$$
(-1)^{deg(x)deg(z)}[x,[y,z]]+(-1)^{deg(x)deg(y)}[y,[z,x]]
+(-1)^{deg(y)deg(z)}[z,[x,y]] = 0
$$
are satisfied for any homogeneous tensors $x,y,z\in\bwedge\mathfrak{g}_A$.
\end{theorem}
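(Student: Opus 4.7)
The plan is to proceed in the order the theorem lists and to reduce everything to scalars and vectors, where the identities become direct consequences of the Lie Rinehart axioms and of the fact that $D:\mathfrak{g}\to Der(A)$ is a Lie algebra morphism. The first thing to check is that (\ref{SN_1}) is well defined on the quotient defining $\bwedge\mathfrak{g}_A$ and compatible with the $A$-module structure. $\R$-bilinearity and the degree count $|[x,y]| = |x|+|y|-1$, which gives homogeneity of degree zero in the antisymmetric grading, are visible from the defining formula. Vanishing on a simple tensor with two equal entries follows by the pairing produced by the signs $(-1)^{i+j}$. The delicate point is that moving a scalar $a\in A$ between factors of a simple tensor must not alter the value of the bracket: this is exactly where the Leibniz rule of the pair enters, because the terms in which $[\cdot,\cdot]$ lands on the scaled factor produce an extra $D_{y_j}(a)$ contribution, and one checks that these contributions recombine into precisely what is obtained by attaching $a$ to another factor.

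Graded antisymmetry is then a matter of book-keeping: swapping the two arguments flips each $[x_i, y_j]$ to $-[x_i,y_j]$ and reverses the order of the surviving $n-1+m-1$ wedge factors, and one checks that the collected sign equals $-(-1)^{deg(x)\,deg(y)}$. For the Leibniz-type derivation property I would argue by induction on $|z|$, with base cases $z\in A$ and $z\in\mathfrak{g}$ verified directly from the defining formula together with the Lie Rinehart axioms, and the inductive step following from associativity of $\smwedge$ applied to $y\smwedge z$.

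The main obstacle will be the Jacobi identity, because the signs proliferate. My strategy is to use the derivation property just established to peel off one wedge factor at a time from each argument, reducing the identity to its statement on tensors of degree at most one. In that reduced situation the Jacobi identity becomes precisely the identity verified in the proof that $A\oplus\mathfrak{g}$ is a graded Lie algebra, namely the Jacobi identity of $\mathfrak{g}$ together with $D_x\circ D_y - D_y\circ D_x = D_{[x,y]}$ on $A$. This scheme has been carried out in detail in \cite{CM} and \cite{PM} in the case of multivector fields, and since the argument only consumes the Lie Rinehart axioms it transports verbatim to arbitrary pairs $(A,\mathfrak{g})$.
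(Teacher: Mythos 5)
Your proposal is correct and ultimately travels the same road as the paper: the paper's own proof consists of the degree count $|[x,y]|=|x|+|y|-1$, hence $deg([x,y])=deg(x)+deg(y)$, followed by the single remark that all remaining properties are ``computed verbatim as for the Schouten-Nijenhuis bracket of multivector fields,'' deferring to \cite{CM} and \cite{PM} exactly as you do at the end. The one place where you add substance the paper omits is the well-definedness check: since the bracket is only prescribed on simple tensors and extended by $A$-additivity, one must verify that moving a scalar $a$ between factors, $[(a\cdot_A x_1)\smwedge x_2, y]$ versus $[x_1\smwedge(a\cdot_A x_2),y]$, gives the same answer, and your observation that the extra $D_y(a)$-terms produced by the Leibniz rule recombine (using $x_2\smwedge x_1=-x_1\smwedge x_2$ on vectors) is exactly the computation that makes this work. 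Given that the paper's preceding section is devoted to showing that the analogous codifferential \emph{fails} to be well defined for precisely this reason, this is arguably the only step in the whole theorem that does not transport verbatim from the multivector-field case and genuinely needs the Lie Rinehart axioms; the paper leaves it implicit, you make it explicit. Your reduction of the Jacobi identity via the derivation property to tensors of degree at most one, where it becomes the Jacobi identity of $\mathfrak{g}$ together with $D_x\circ D_y-D_y\circ D_x=D_{[x,y]}$, is the standard scheme and is sound, though to execute it cleanly you would need to combine the derivation property with graded antisymmetry so that the Jacobiator is seen to be a derivation in each slot separately.
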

\begin{proof}On homogeneous tensors $x$ and $y$ we get 
$|[x,y]|=|x|+|y|-1$ and the bracket is homogeneous of tensor 
degree $-1$. This in turns gives $deg([x,y])=|x|+|y|-2=deg(x)+deg(y)$ and consequently
the bracket is homogeneous of degree zero with respect to the antisymmetric
grading.

All other properties are computed verbatim as for the Schouten-Nijenhuis
bracket of multivector fields.
\end{proof}
Now lets look at exactly the same situation, but from the tensor grading of
$\bwedge\mathfrak{g}_A$. There is a 'natural transformation' between the tensor
and the antisymmetric grading 
called \textit{decalag\'e morphism} \cite{FM}, which can be used to
transform graded antisymmetric operators into graded symmetric ones and
vis versa. Applied to the antisymmetric Schouten-Nijenhuis 
bracket, this gives a graded \textit{symmetric} operator:
\begin{definition}Let $(A,\mathfrak{g})$ be a Lie Rinehart pair and 
$[\cdot,\cdot]$ the antisymmetric Schouten-Nijenuis bracket on $\bwedge\mathfrak{g}_A$. 
Then the operator
\begin{equation}
\{\cdot,\cdot\}: \bwedge\mathfrak{g}\times\bwedge\mathfrak{g}\to
 \bwedge\mathfrak{g}
\end{equation}
defined by $\{x,y\}=e(x)[y,x]$
on homogeneous tensors $x,y\in\bwedge\mathfrak{g}$ and extended to all of 
$\bwedge\mathfrak{g}$ by $A$-additivity is called the 
\textbf{symmetric Schouten-Nijenhuis bracket}.
\end{definition} 
All properties of the antisymmetric bracket transform properly under the
decalgn\'e morphism and both brackets coincides with the original Lie bracket
on vectors:
\begin{corollary}The symmetric Schouten-Nijenhuis bracket $\{.,.\}$ 
is a $\R$-bilinear, graded symmetric operator,
homogeneous of degree $-1$ with respect to the
tensor grading. In particular the symmetry equation as well as the Jacobi equation
$$\label{symmetric_jacobi}
\begin{array}{cc}
\{x,y\}=e(x,y)\{y,x\}, &
\textstyle\sum_{s\in Sh(2,1)}e\left(s;x_1,x_2,x_3\right)
\{\{x_{s(1)},x_{s(2)}\},x_{s(3)}\}=0
\end{array}$$
is satisfied for all homogeneous tensors $x,y,z\in\bwedge\mathfrak{g}_A$
and on vectors $x,y\in\bbwedge{1}\mathfrak{g}$, the bracket equals the original
Lie bracket of $\mathfrak{g}$ that is
$\{x,y\}=[x,y]$.
\end{corollary}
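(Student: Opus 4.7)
The strategy is to transport each property of the antisymmetric bracket, already established in the preceding theorem, across the d\'ecalage isomorphism $\{x,y\}:=e(x)[y,x]$. Since the symmetric bracket is built $\R$-bilinearly out of the antisymmetric one together with the scalar factor $e(x)$, both $\R$-bilinearity and the restriction to vectors are essentially automatic: for $|x|=|y|=1$ we have $deg(x)=deg(y)=0$, the sign $e(x)$ collapses to $-1$, and combining with $[y,x]=-[x,y]$ from graded antisymmetry recovers $\{x,y\}=[x,y]$ as the original Lie bracket of $\mathfrak{g}$. The homogeneity statement is also immediate: the antisymmetric bracket was shown to satisfy $|[x,y]|=|x|+|y|-1$, so the prefactor $e(x)$ does not change tensor degree and $\{\cdot,\cdot\}$ is homogeneous of tensor degree $-1$.

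For the graded symmetry $\{x,y\}=e(x,y)\{y,x\}$, the plan is to apply the antisymmetry relation $[y,x]=-(-1)^{deg(x)deg(y)}[x,y]$ from the previous theorem, substitute $deg(x)=|x|-1$ and $deg(y)=|y|-1$, and then reorganize the resulting Koszul sign together with the two d\'ecalage factors $e(x)$ and $e(y)$ into the single Koszul sign $e(x,y)$ associated with exchanging tensor-homogeneous elements. This is a purely combinatorial step based on the identity $e(x)e(y)(-1)^{(|x|-1)(|y|-1)}=-e(x,y)\cdot\text{(sign of transposition)}$, which is the standard content of the d\'ecalage lemma.

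The Jacobi identity will take the most work. The starting point is the antisymmetric Jacobi identity
\[
(-1)^{deg(x_1)deg(x_3)}[x_1,[x_2,x_3]]+(-1)^{deg(x_1)deg(x_2)}[x_2,[x_3,x_1]]+(-1)^{deg(x_2)deg(x_3)}[x_3,[x_1,x_2]]=0
\]
established above. I would rewrite each $[\cdot,\cdot]$ in terms of $\{\cdot,\cdot\}$ using the defining formula (inverted appropriately), collect the sign contributions coming from (i) swapping arguments inside and outside the nested brackets and (ii) the four d\'ecalage factors appearing in each term, and check that the resulting three terms are precisely the three $(2,1)$-shuffle contributions with the shuffle sign $e(s;x_1,x_2,x_3)$. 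Each of the three cyclic terms of the antisymmetric identity will map to one of the three shuffles $(12|3),(13|2),(23|1)$, and the antisymmetric Koszul signs will recombine, after the substitution $deg=|\cdot|-1$, into the symmetric shuffle signs.

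The main obstacle is the sign bookkeeping in this last step: one must verify that after substitution the overall sign of each of the three terms matches exactly the Koszul shuffle sign, with no extra residual factor. This is the classical content of the d\'ecalage isomorphism between graded antisymmetric and graded symmetric Lie-type structures, and once the sign identity is verified once, the rest of the argument is mechanical. No genuinely new information about $(A,\mathfrak{g})$ is needed beyond what the antisymmetric theorem already provides.
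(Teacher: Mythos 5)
Your proposal is correct and follows essentially the same route as the paper, which likewise derives all properties of $\{\cdot,\cdot\}$ by transporting the corresponding properties of the antisymmetric bracket across the d\'ecalage sign $e(x)$ (the paper's proof reads simply that the claim ``follows from the properties of the d\'ecalage morphism or can else be verified by simple computations''). Your sign bookkeeping, in particular $e(x)e(y)(-1)^{(|x|-1)(|y|-1)}=-e(x,y)$ for the symmetry and the matching of the three cyclic terms with the three $(2,1)$-shuffles for the Jacobi identity, is exactly the verification the paper has in mind.
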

\begin{proof}This follows from the properties of the decalgn\'e morphism or
can else be verified by simple computations.
\end{proof}
According to a better readable text we will always use the Koszuls sign 
conventions (\ref{Koszul_convention}), when it comes to expressions, which
are graded symmetric with respect to the tensor grading.

Any morphism of Lie Rinehart pairs prolongs
to a morphism of exterior algebras and the following corollary
shows that this is in fact a morphism of graded Lie algebras with respect
to the Schouten-Nijenuis bracket:
\begin{prop}\label{schouten-morphism}
Let $(f,g):(A,\mathfrak{g})\to (B,\mathfrak{h})$ be a morphism of Lie Rinehart pairs. 
The associated morphism $\bwedge g_f:\bwedge\mathfrak{g}_A\to\bwedge\mathfrak{h}_B$ of
exterior algebras is a morphism of graded Lie algebras,
with respect to the Schouten-Nijenuis bracket. 
\end{prop}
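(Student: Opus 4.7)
The plan is to verify the identity $\bwedge g_f([x,y]) = [\bwedge g_f(x), \bwedge g_f(y)]$ for all $x, y \in \bwedge\mathfrak{g}_A$. Since both brackets and both applications of $\bwedge g_f$ are defined by extension from simple tensors, and since any element of $\bwedge\mathfrak{g}_A$ is a finite sum of simple exterior tensors, $\R$-bilinearity reduces the verification to the case where $x$ and $y$ are each simple tensors.

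First I would dispatch the low tensor-degree cases, which also serve as base cases. When $x,y \in A$ are scalars, both sides are zero. When $x \in \mathfrak{g}$ is a vector and $a \in A$ a scalar, the left-hand side equals $\bwedge g_f(D_x(a)) = f(D_x(a))$, while the right-hand side equals $[g(x), f(a)] = D_{g(x)}(f(a))$; these agree by the second compatibility equation in (\ref{LR-morph}). When $x, y \in \mathfrak{g}$ are vectors, the left-hand side equals $g([x,y])$ and the right-hand side equals $[g(x), g(y)]$ in $\mathfrak{h}$, which agree because $g$ is a Lie algebra morphism.

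For simple tensors of arbitrary length, I would apply the explicit formula (\ref{SN_1}) to expand $[x_1 \smwedge \cdots \smwedge x_n, y_1 \smwedge \cdots \smwedge y_m]$ as a sum of terms of the form $(-1)^{i+j}[x_i, y_j]\smwedge x_1 \smwedge \cdots \widehat{x_i} \cdots \smwedge y_m$. Then, invoking the previous proposition (so $\bwedge g_f$ distributes over the wedge product and sends each vector factor $x_k$ to $g(x_k)$), the left-hand side becomes
\begin{equation*}
\textstyle\sum_{i,j}(-1)^{i+j} g([x_i, y_j]) \smwedge g(x_1) \smwedge \cdots \widehat{g(x_i)} \cdots \smwedge g(x_n) \smwedge g(y_1) \smwedge \cdots \widehat{g(y_j)} \cdots \smwedge g(y_m).
\end{equation*}
Using that $g$ is a Lie algebra morphism, each $g([x_i, y_j])$ can be replaced by $[g(x_i), g(y_j)]$, which is precisely the expansion of $[\bwedge g_f(x), \bwedge g_f(y)]$ via (\ref{SN_1}) applied in $\bwedge\mathfrak{h}_B$.

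The main subtlety I expect is bookkeeping around well-definedness: both the Schouten-Nijenhuis bracket and the associated morphism $\bwedge g_f$ are defined on simple tensors and then extended by $A$-additivity, so one must make sure that the chain of equalities above respects the relations defining $\bwedge\mathfrak{g}_A$ and $\bwedge\mathfrak{h}_B$. But that well-definedness was already established in the preceding theorem and proposition, so the computation above completes the proof without further effort.
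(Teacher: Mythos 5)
Your proposal is correct and follows essentially the same route as the paper: the paper's printed proof is only a one-line appeal to naturality of $\bwedge g$ and to $g$ being a Lie algebra morphism, but its underlying computation (checking scalars, scalar--vector pairs, and then simple tensors via the explicit formula (\ref{SN_1}) and reducing by additivity) is exactly what you carry out. The only small addition worth keeping explicit is the remark that $\bwedge g_f$ is homogeneous of tensor degree zero, which is the grading part of being a morphism of graded Lie algebras.
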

\begin{proof}$\bwedge g_f$ is homogeneous of degree zero with respect to the
tensor grading. The rest follows, since $\bwedge g$ is natural and $g$ a 
Lie algebra morphism.
\end{proof}
 \begin{remark}
Since $\bwedge^0\mathfrak{g}\simeq A$ and $\bwedge^1\mathfrak{g}_A\simeq \mathfrak{g}$,
there is a natural injection 
$A\oplus\mathfrak{g}\hookrightarrow\bwedge\mathfrak{g}_A$ of graded vector spaces 
and since the Schouten-Nijenhuis bracket coincides with the bracket of 
$A\oplus\mathfrak{g}$
on scalars and vectors, this is in fact a morphism of graded Lie algebras.
\end{remark}
\section{The Lie $\I$-Algebra of a Lie Rinehart pair}We expand the
Schouten-Nijenhuis algebra into a 
Lie $\I$-algebra with non trivial higher brackets. Since the zero
morphism is the only general (co)differential in this setting, 
the 'unary' bracket has to vanish.

The structure we obtain is particularly simple and merely a change 
in perspective. Its real advantage lies in the fact, 
that we gain access to a lot more morphisms. 
Morphisms which are just not there, when we restrict to the Schouten-Nijenhuis
picture.

Finally we show that there is a weak injection of any Lie Rinehart pair into its
associated Lie $\I$-algebra.
\begin{definition}[Higher Lie Brackets]Let $(A,\mathfrak{g})$ be a Lie
Rinehart pair with exterior algebra $\bwedge\mathfrak{g}_A$ and 
$[\cdot,\cdot]$ the antisymmetric Schouten-Nijenhuis bracket. Then the 
\textbf{Lie n-bracket}
\begin{equation}
\{\cdot,\cdots,\cdot\}_n : \bwedge\mathfrak{g}_A \times \cdots \times
 \bwedge\mathfrak{g}_A \to \bwedge\mathfrak{g}_A
\end{equation}
is defined for any integer $n \geq 2$ and homogeneous tensors
$x_1,\ldots,x_n\in \bwedge\mathfrak{g}_A$ by
\begin{multline*}
\{x_1,\ldots,x_n\}_n:=\\ \textstyle\sum_{s\in Sh(2,n-2)}e(s;x_1,\ldots,x_n)e(x_{s(1)})\;
x_{s(n)}\smwedge\cdots\smwedge x_{s(3)}\smwedge[x_{s(2)},x_{s(1)}]
\end{multline*}
and then extended to all of $\bwedge\mathfrak{g}_A$ by $A$-additivity.
\end{definition}
In particular the Lie $2$-bracket is just the symmetric Schouten-Nijenhuis
bracket. If we referee to the Lie $n$-bracket for any $n \in \N$, we
consider the zero operator as the 'Lie 1-bracket',
that is we write $\{\;\cdot\;\}_1$ for the zero operator
in this context.

The following proposition provides the technical details to
show that the sequence of Lie $n$-brackets defines a 
non negatively graded Lie $\I$-algebra on the exterior power of any Lie 
Rinehart pair.
\begin{prop}The Lie $n$-bracket $\{\cdot,\cdots,\cdot\}_n$ 
is a graded symmetric, $n$-linear operator, homogeneous of 
tensor degree $-1$ for any $n\in\N$ and for $p$, $q\in\N$ with $p+q=n+1$ 
and $p>1$ as well as $q>1$ the equation 
\begin{equation}\label{jacobi_like}
\textstyle\sum_{s\in Sh(q,p-1)}e(s;x_1,\ldots,x_n)
 \{\{x_{s(1)},\ldots,x_{s(q)}\}_q,x_{s(q+1)},\ldots,x_{s(n)}\}_p=0
\end{equation}
is satisfied.
\end{prop}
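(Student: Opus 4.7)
The plan is to treat each of the three claims of the proposition separately: graded symmetry, multilinearity and the tensor-degree bound are essentially bookkeeping from the defining shuffle sum, while identity \eqref{jacobi_like} is the substance. All sign manipulation will ultimately be reduced to the three properties of the antisymmetric Schouten-Nijenhuis bracket established in the preceding theorem: $\R$-bilinearity, the graded Leibniz rule $[x,y\smwedge z]=[x,y]\smwedge z+(-1)^{deg(x)(deg(y)-1)}y\smwedge[x,z]$, and the graded antisymmetric Jacobi identity.

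Multilinearity is immediate from the $\R$-bilinearity of $\smwedge$ and $[\cdot,\cdot]$. The tensor-degree count is a one-term check: since $[\cdot,\cdot]$ has tensor degree $-1$ and $\smwedge$ has tensor degree $0$, each summand of $\{x_1,\ldots,x_n\}_n$ has total tensor degree $|x_1|+\cdots+|x_n|-1$. Graded symmetry in the tensor grading reduces to the transposition $x_i\leftrightarrow x_{i+1}$: this swap induces a bijection on $(2,n-2)$-shuffles whose effect on the Koszul sign $e(s;x_1,\ldots,x_n)\,e(x_{s(1)})$, combined with either graded commutativity of $\smwedge$ (when at most one of the swapped elements lies inside the Schouten bracket) or graded antisymmetry of $[\cdot,\cdot]$ (when both lie inside it), produces exactly the Koszul sign $e(x_i,x_{i+1})$ demanded by graded symmetry.

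For identity \eqref{jacobi_like} I would unfold both the inner $q$-bracket and the outer $p$-bracket through their shuffle-sum definitions. The resulting triple sum splits into two classes, organised by what the outer bracket brackets: class A, in which the outer bracket pairs two of the ``free'' arguments $x_{s(q+1)},\ldots,x_{s(n)}$, leaving the inner bracket result $Y=\{x_{s(1)},\ldots,x_{s(q)}\}_q$ merely as a wedge factor; class B, in which the outer bracket pairs $Y$ against some $x_{s(k)}$, requiring the graded Leibniz rule to expand $[Y,x_{s(k)}]$. The Leibniz expansion further splits class B into a \emph{double-bracket} part, where $x_{s(k)}$ is bracketed with the inner Schouten bracket to form $[[x_a,x_b],x_{s(k)}]$, and \emph{wedge-hit} parts, where $x_{s(k)}$ is bracketed with one of the $q-2$ wedge factors of $Y$. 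After reorganising the nested shuffle sum over $Sh(q,p-1)$ and the two inner $(2,\cdot)$-shuffles into shuffles of disjoint blocks that single out either the three indices participating in an iterated bracket (class B double-bracket) or the two unordered pairs participating in disjoint brackets (class A together with class B wedge-hit), the double-bracket contributions collect into cyclic sums that vanish by the graded Jacobi identity of the antisymmetric Schouten bracket, while the class A and class B wedge-hit contributions over the same unordered pair-of-pairs cancel against each other by the graded antisymmetries.

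The principal obstacle is purely combinatorial: tracking the compound Koszul signs $e(s;\cdots)\,e(x_{s(1)})$ through this reorganisation and verifying that the combinatorial counts of shuffles routing a given subset through the ``inner'' or ``outer'' role produce matching coefficients for the pairwise cancellations. Conceptually the identity reflects that $(\bwedge\mathfrak{g}_A,\smwedge,[\cdot,\cdot])$ carries enough Gerstenhaber-type structure for a higher-derived-bracket construction to yield an $L_\I$-algebra with vanishing unary bracket, consistent with the non-existence of a natural codifferential established in the previous section.
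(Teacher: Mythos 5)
Your strategy for (\ref{jacobi_like}) is in outline the paper's own: expand the inner $q$-bracket and the outer $p$-bracket, split according to whether the outer Schouten bracket pairs two free arguments (your class A) or pairs the inner result $Y$ against a free argument (your class B), expand $[x_{s(k)},Y]$ by the Leibniz rule into a double-bracket part and wedge-hit parts, kill the double-bracket part with the graded Jacobi identity of the Schouten bracket, and dispose of the residual terms containing two disjoint brackets by a sign cancellation. The degree count is identical, and your transposition-case analysis of graded symmetry is a spelled-out version of the paper's remark that $e(x_1)[x_2,x_1]$ is the symmetric Schouten bracket and the $n$-bracket is a graded symmetric composite of it with $\smwedge$.

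The gap lies in the final cancellation and in the bookkeeping you defer. You assert that the class A contributions and the class B wedge-hit contributions ``over the same unordered pair-of-pairs cancel against each other.'' That is not the mechanism, and a term-by-term cross-class pairing cannot be arranged: once the nested shuffle sums are normalized, the two families enter with different combinatorial coefficients (in the paper's equation (\ref{eq_19}) the class A family carries $\tfrac{1}{4(q-2)!(p-3)!}$ while the class B family carries $\tfrac{1}{2(q-2)!(p-2)!}$, and further factors accrue as the Leibniz rule is iterated). What actually happens is that each family \emph{separately} reduces to the canonical sum (\ref{eq_aa}), $\sum_{s\in S_n}e(s;x_1,\ldots,x_n)e(x_{s(2)})e(x_{s(3)})\,x_{s(n)}\smwedge\cdots\smwedge x_{s(5)}\smwedge[x_{s(4)},x_{s(3)}]\smwedge[x_{s(2)},x_{s(1)}]$, which vanishes on its own under the involution $s\mapsto(s(3),s(4),s(1),s(2),s(5),\ldots,s(n))$ that swaps the two bracketed pairs and reverses the sign of each summand. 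To make that involution available you must first absorb every nested shuffle sum into a single sum over $S_n$ (dividing by the appropriate multinomial factors) and, for $q\geq 4$, iterate the Leibniz rule $q-4$ more times to bring every wedge-hit term into canonical form; this reorganization and sign-tracking is precisely what you flag as the ``principal obstacle'' and do not carry out, and it constitutes essentially the entire body of the paper's proof. As written, your proposal identifies the right ingredients but names an incorrect cancellation pattern and leaves the decisive computation unexecuted, so it does not yet establish (\ref{jacobi_like}).
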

\begin{proof}
The exterior product is homogeneous of degree zero and the 
antisymmetric Schouten-Nijenhuis bracket is 
homogeneous of degree $-1$,
with respect to the tensor grading. It follows that any $n$-ary bracket 
is homogeneous of degree $-1$.

To proof graded symmetry observe that the exterior product is graded
symmetric with respect to the tensor grading and that the expression
$e(x_1)[x_2,x_1]$ is precisely the graded symmetric Schouten-Nijenhuis bracket. 
The symmetry of the Lie $n$-bracket then follows since it is a graded symmetric 
composition of both operators.

Now to see that any of the 'Jacobi-like' shuffle sums
$$
\textstyle\sum_{s\in Sh\left(q,p-1\right)}e\left(s;x_1,\ldots,x_n\right)
 \{\{x_{s(1)},\ldots,x_{s(q)}\}_q,x_{s(q+1)},\ldots,x_{s(n)}\}_p
$$
vanishes, observe that for $n=3$ and $p=q=2$ this is nothing but the
ordinary (graded symmetric) Jacobi expression (\ref{symmetric_jacobi}). 
To see it for $n\geq 4$, 
let $x_1,\ldots,x_n\in\bwedge\mathfrak{g}_A$ be homogeneous and 
$p,q\in\N$ with $p+q=n+1$ and $p,q\geq 2$. 
First use the graded symmetry of the brackets, to rewrite the shuffle sum
into a sum over arbitrary permutations. This gives
$$\textstyle\frac{1}{q!\left(p-1\right)!}\sum_{s\in S_n}
 e\left(s;x_1,\ldots,x_n\right)
  \{\{x_{s(1)},\ldots,x_{s(q)}\}_q,x_{s(q+1)},\ldots,x_{s(n)}\}_p\;.
$$
Then apply the definition of $\{\cdot,\ldots,\cdot\}_q$.
Writing $Sh_{\{s(1),\ldots,s(q)\}}\left(i,j\right)$ for
the set of all $(i,j)$-shuffles but explicit as permutations of the set $\{s(1),\ldots,s(q)\}$ 
the previous expression becomes
\begin{multline*}
\textstyle\frac{1}{q!\left(p-1\right)!}\sum_{s\in S_n}
 \sum_{t\in Sh_{\{s(1),\ldots,s(q)\}}\left(2,q-2\right)}
 e\left(s;x_1,\ldots,x_n\right)\,e\left(t;x_{s(1)},\ldots,x_{s(q)}\right)\cdot \\
\phantom{=.}\cdot e\left(x_{ts(1)}\right)\{x_{ts(q)}\smwedge\cdots\smwedge x_{ts(3)}\smwedge[\,x_{ts(2)},x_{ts(1)}\,],
 x_{s(q+1)},\ldots,x_{s(n)}\}_p\;.
\end{multline*}
Now observe, that for any $s\in S_n$  
and shuffle $t\in Sh_{\{s(1),\ldots,s(q)\}}(2,q-2)$, 
the permutation $(ts(1),\ldots,ts(q),s(q+1),\ldots,s(n))$ is again an element of
$S_n$ and since there are precisely $\frac{q!}{(q-2)!2!}$ many
shuffles in $Sh_{\{s(1),\ldots,s(q)\}}(2,q-2)$ we can just 'absorb' the second sum
over shuffles in the previous expression into the sum over general permutation. After reindexing we get: 
\begin{multline*}
\textstyle\frac{1}{q!(p-1)!}\frac{q!}{(q-2)!2!}\sum_{s\in S_n}
 e\left(s;x_1,\ldots,x_n\right)e\left(x_{s(1)}\right)\cdot\\
\phantom{=.}\cdot
 \{x_{s(q)}\smwedge\cdots\smwedge x_{s(3)}\smwedge[\,x_{s(2)},x_{s(1)}\,],
  x_{s(q+1)},\ldots,x_{s(n)}\}_p
\end{multline*}
Then apply the definition of the bracket a second time. 
Care has to be taken regarding the first element.
In fact there are two possible positions for it:
At the most right position, i.e as the second argument inside of the Schouten-Nijenhuis bracket or at the most right position outside of the bracket. Taking
this into account split the expression into two parts according to the position
of the first element. If $p=2$ the second shuffle sum is omitted:
\begin{align*}  
&\textstyle\frac{1}{2(p-1)!(q-2)!}\sum_{s\in S_n}
 \sum_{t\in Sh_{\{s(q+1),\ldots,s(n)\}}(1,p-2)} e\left(s;x_1,\ldots,x_n\right)\cdot\\ 
&\phantom{=.}\cdot e\left(t;x_{s(q+1)},\ldots,x_{s(n)}\right)e\left(x_{s(1)}\right)
 e\left(x_{s(q)}\smwedge\cdots\smwedge
  x_{s(3)}\smwedge[\,x_{s(2)},x_{s(1)}\,]\,\right)\cdot\\
&\phantom{=.}\cdot x_{ts(n)}\smwedge\cdots\smwedge x_{ts(q+2)}\smwedge
  [x_{ts(q+1)},x_{s(q)}\smwedge\cdots\smwedge x_{s(3)}\smwedge[x_{s(2)},x_{s(1)}]]\\ 
&+\textstyle\frac{1}{2\left(p-1\right)!\left(q-2\right)!}\sum_{s\in S_n}
 \sum_{t\in Sh_{\{s(q+1),\ldots,s(n)\}}\left(2,p-3\right)} 
  e\left(s;x_1,\ldots,x_n\right)\,\cdot\\
&\phantom{=.}\cdot e\left(t;x_{s(q+1)},\ldots,x_{s(n)}\right)
 e\left(x_{s(q)}\smwedge\cdots\smwedge x_{s(3)}\smwedge
  [x_{s(2)},x_{s(1)}],x_{ts(q+1)}\right)\,\cdot\\
&\phantom{=.}\cdot 
e\left(x_{s(q)}\smwedge\cdots\smwedge x_{s(3)}\smwedge
 [x_{s(2)},x_{s(1)}],x_{ts(q+2)}\right)e\left(x_{s(1)}\right)
  e\left(x_{ts(q+1)}\right)\,\cdot\\
&\phantom{=.}\cdot 
 x_{ts(n)}\smwedge\cdots\smwedge x_{ts(q+3)}\smwedge
  x_{s(q)}\smwedge\cdots\smwedge x_{s(3)}\smwedge[x_{s(2)},x_{s(1)}]\smwedge
   [x_{ts(q+2)},x_{ts(q+1)}]
\end{align*}
Again we 'absorb' the additional shuffle sums in both cases into the
appropriate sum over general permutations. After reindexing and simplification
this becomes
\begin{align} 
&-\textstyle\frac{1}{2(q-2)!(p-2)!}\sum_{s\in S_n}
 e(s;x_1,\ldots,x_n)e(x_{s(2)})\cdots e(x_{s(q)})\,
x_{s(n)}\smwedge\cdots\smwedge x_{s(q+2)}\smwedge\nonumber\\
&\phantom{=.}
 \smwedge [\,x_{s(q+1)},x_{s(q)}\smwedge\cdots\smwedge x_{s(3)}\smwedge
  [x_{s(2)},x_{s(1)}\,]\,]\label{eq_19}\\ 
&+\textstyle\frac{1}{4(q-2)!(p-3)!}\sum_{s\in S_n}
 e\left(s;x_1,\ldots,x_n\right)e\left(x_{s(2)}\right)e\left(x_{s(3)}\right)\,
  x_{s(n)}\smwedge\cdots\smwedge x_{s(5)}\smwedge\nonumber\\
&\phantom{=.}
 \smwedge [x_{s(4)},x_{s(3)}]\smwedge [x_{s(2)},x_{s(1)}]\nonumber     
\end{align}
and again the second sum is omitted for $p=2$. Now lets look on both sums separately. From
\begin{align*}
&e(x_2)e(x_3)[x_4,x_3]\smwedge [x_2,x_1]
= e(x_2)e(x_3)e([x_2,x_1],[x_4,x_3])[x_2,x_1]\smwedge [x_4,x_3]\\
&=-e(x_1,x_3)e(x_1,x_4)e(x_2,x_3)e(x_2,x_4)e(x_1)e(x_4)[x_2,x_1]\smwedge [x_4,x_3]
\end{align*}
we see that the expression
\begin{multline}\label{eq_aa}
\textstyle\sum_{s\in S_n}e(s;x_1,\ldots,x_n)e(x_{s(2)})e(x_{s(3)})\;\cdot\\
 \cdot x_{s(n)}\smwedge\cdots\smwedge x_{s(5)}\smwedge[x_{s(4)},x_{s(3)}]\smwedge
  [x_{s(2)},x_{s(1)}]
\end{multline}
vanishes, since for any permutation $s\in S_n$ there is precisely one permutation
$t\in S_n$ with $t=(s(3),s(4),s(1),s(2),s(5),\ldots,s(n))$
and then the term for $t$ cancel against the term for $s$.

Now only the first sum in (\ref{eq_19}) remains, but for $q=2$ it
vanishes too due to the graded Jacobi equation  of the symmetric
Schouten-Nijenhuis bracket. 
This proofs the equation for $q=2$.

For $q\geq 3$ use the Poisson identity 
$[x,y\smwedge z]=[x,y]\smwedge z+e(x,y)e(y)y\smwedge [x,z]$ 
of the antisymmetric Schouten-Nijenhuis bracket and omit the constant
factor in (\ref{eq_19}). After simplification we get
\begin{align}\label{eq_21}
&\textstyle\sum_{s\in S_n}e\left(s;x_1,\ldots,x_n\right)
 e\left(x_{s(2)}\right)\cdots e\left(x_{s(q)}\right)\,\cdot\nonumber\\
&\phantom{=.}\cdot
 x_{s(n)}\smwedge\cdots\smwedge x_{s(q+2)}\smwedge
  [x_{s(q+1)},x_{s(q)}\smwedge\cdots\smwedge x_{s(3)}]
   \smwedge[x_{s(2)},x_{s(1)}]\\
&+\textstyle\sum_{s\in S_n}e\left(s;x_1,\ldots,x_n\right)
 e\left(x_{s(2)}\right)
  x_{s(n)}\smwedge\cdots\smwedge x_{s(4)}\smwedge[x_{s(3)},[x_{s(2)},x_{s(1)}]]
  \nonumber\,.
\end{align}
The second sum vanishes due to the graded Jacobi equation of the 
symmetric Schouten-Nijenhuis bracket. For $q=3$ the first sum vanishes too,
since we arrive at situation (\ref{eq_aa}). This proofs the equation for 
$q=3$.

For $q\geq 4$ we apply the Poisson identity to the remaining part again and 
after simplification we get
\begin{align*} 
&\textstyle\sum_{s\in S_n}e\left(s;x_1,\ldots,x_n\right)
 e\left(x_{s(2)}\right)e\left(x_{s(3)}\right)
 x_{s(n)}\smwedge\cdots\smwedge x_{s(5)}\smwedge
  [x_{s(4)},x_{s(3)}]\smwedge[x_{s(2)},x_{s(1)}]\\
&+\textstyle\sum_{s\in S_n}e\left(s;x_1,\ldots,x_n\right)
 e\left(x_{s(2)}\right)\cdots e\left(x_{s(q-1)}\right)\,\cdot\\
&\phantom{=.}\cdot
 x_{s(n)}\smwedge\cdots\smwedge
  x_{s(q+1)}\smwedge[x_{s(q)},x_{s(q-1)}\smwedge\cdots\smwedge x_{s(3)}] 
   \smwedge[x_{s(2)},x_{s(1)}]
\end{align*}
Again the first sum vanishes since it is the same situation as in (\ref{eq_aa}) 
and the second sum vanishes for $q=4$ due to the same reason. 
For $q>4$ the second sum equals the first in (\ref{eq_21}) but for $q-1$ instead. 
Consequently we have to repeat the last computation 
$(q-4)$-times, to arrive at an expression that is equal to (\ref{eq_aa}) and hence
vanishes. This proofs the equation. 
\end{proof}
Taking into account, that the unary bracket $\{\cdot\}_1$
has to be the zero operator, we can combine
the brackets into a Lie $\I$-algebra on the exterior power of any Lie Rinehart
pair:
\begin{theorem}[The Lie $\I$-algebra]
Let $(A,\mathfrak{g})$ be a Lie Rinehart pair with exterior
power $\bwedge\mathfrak{g}_A$. Then
$\left(\bwedge\mathfrak{g}_A,(\{\cdot,\ldots,\cdot\}_k)_{k\in\N}\right)$
is a Lie $\I$-algebra, concentrated in non negative degrees.
\end{theorem}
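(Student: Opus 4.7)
The plan is to verify the defining axioms of a Lie $\I$-algebra in the convention used throughout the paper: a non negatively graded vector space equipped with a family of graded symmetric multilinear brackets $\{\cdot,\ldots,\cdot\}_k$, each homogeneous of tensor degree $-1$, subject to the higher Jacobi identity
\begin{equation*}
\textstyle\sum_{i+j=n+1}\sum_{s\in Sh(i,n-i)} e(s;x_1,\ldots,x_n)\,
\{\{x_{s(1)},\ldots,x_{s(i)}\}_i,x_{s(i+1)},\ldots,x_{s(n)}\}_j=0
\end{equation*}
for every $n\in\N$.

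First I would record the easy structural facts. The decomposition $\bwedge\mathfrak{g}_A=\bigoplus_{n\geq 0}\bbwedge{n}\mathfrak{g}_A$ makes the concentration in non negative tensor degrees immediate. The graded symmetry, $n$-linearity and tensor degree $-1$ property of each $\{\cdot,\ldots,\cdot\}_k$ with $k\geq 2$ are delivered directly by the preceding proposition, and the unary bracket $\{\cdot\}_1:=0$ trivially satisfies these three conditions.

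The core step is then to match the higher Jacobi identity above with equation (\ref{jacobi_like}). The key observation is that $\{\cdot\}_1=0$ kills every summand in which $i=1$ or $j=1$, so for $n=1$ and $n=2$ the identity holds automatically (each decomposition $i+j=n+1$ has one factor equal to one). For $n\geq 3$ the only surviving summands are those with $i,j\geq 2$, and setting $q:=i$ and $p:=j$ turns the required identity into precisely (\ref{jacobi_like}), with the same shuffle set $Sh(q,p-1)=Sh(i,n-i)$ and the same Koszul sign $e(s;x_1,\ldots,x_n)$.

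I do not anticipate a real obstacle here: all of the analytic content has been absorbed into the previous proposition, and the theorem is essentially a repackaging of those identities together with the observation that the zero unary bracket disposes of the only cases the proposition does not address. The only bookkeeping to be careful about is the index translation $(i,j)\leftrightarrow(q,p)$ and a check that the sign and shuffle conventions of the two statements agree, which they do by construction.
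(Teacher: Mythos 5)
Your proposal is correct and follows essentially the same route as the paper's own (much terser) proof: the structural properties and the identities for $p,q\geq 2$ are imported from the preceding proposition, and the remaining summands of the weak Jacobi identity vanish because $\{\cdot\}_1=0$. The index translation and shuffle/sign matching you flag do work out exactly as you describe.
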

\begin{proof}
All operators are graded symmetric and homogeneous of degree $-1$ with respect
to the tensor grading and the weak Jacobi identities (\ref{sh_Jacobi}) 
follow from (\ref{jacobi_like}), since $\{\cdot\}_1$ is the zero operator. 
Moreover the exterior algebra is concentrated in non-negative degrees with
respect to the tensor grading.
\end{proof}
\begin{remark}
Note that the Lie $\I$-structure is particularly simple in this case:
The (co)differential is the zero operator and from (\ref{jacobi_like}) we see, that 
each particular shuffle sum already vanishes for fixed $p$ and $q$  in the
weak Jacobi identities (\ref{sh_Jacobi}).
\end{remark}
The following theorem shows, that the construction is natural with respect to
morphisms of Lie Rinehart pairs. In fact any morphism of Lie Rinehart pairs 
gives rise to a \textit{strict} morphism of Lie $\I$-algebras:
\begin{theorem}
Let $(f,g):(A,\mathfrak{g})\to (B,\mathfrak{h})$ be a morphism of Lie Rinehart pairs. 
The associated exterior algebra morphism 
$\bwedge g_f:\bwedge\mathfrak{g}_A\to\bwedge\mathfrak{h}_B$ is a strict morphism of
Lie $\I$-algebras.
\end{theorem}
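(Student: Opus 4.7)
The plan is to verify the two defining requirements of a strict Lie $\I$-morphism directly, namely that $\bwedge g_f$ is homogeneous of degree zero with respect to the tensor grading, and that it commutes with every $n$-ary bracket $\{\cdot,\ldots,\cdot\}_n$ for $n\in\N$. The degree requirement is immediate from the construction of the associated morphism: by definition $\bwedge g_f$ sends $\bbwedge{n}\mathfrak{g}_A$ into $\bbwedge{n}\mathfrak{h}_B$, since it acts on simple tensors of tensor degree $n$ by $g$ applied componentwise and on scalars by $f$. The case $n=1$ of the bracket condition is trivial because the unary bracket is the zero operator on both sides.

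For $n\geq 2$ I would reduce everything to the two facts already established in the excerpt. First, by the proposition on associated morphisms of exterior algebras, $\bwedge g_f$ commutes with the exterior product: $\bwedge g_f(x\ssmwedge_A y)=\bwedge g_f(x)\ssmwedge_B \bwedge g_f(y)$. Second, by Proposition \ref{schouten-morphism}, $\bwedge g_f$ is a morphism of graded Lie algebras with respect to the antisymmetric Schouten–Nijenhuis bracket, so $\bwedge g_f([x,y])=[\bwedge g_f(x),\bwedge g_f(y)]$ for all homogeneous $x,y\in\bwedge\mathfrak{g}_A$.

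Now the Lie $n$-bracket is defined purely as a signed shuffle sum of expressions of the form $x_{s(n)}\smwedge\cdots\smwedge x_{s(3)}\smwedge [x_{s(2)},x_{s(1)}]$, where the sign depends only on the tensor degrees of the inputs $x_1,\ldots,x_n$. Since $\bwedge g_f$ preserves the tensor degree of each $x_i$, all the signs $e(s;x_1,\ldots,x_n)$ and $e(x_{s(1)})$ appearing in the defining sum are unchanged when each $x_i$ is replaced by $\bwedge g_f(x_i)$. Applying $\bwedge g_f$ term-by-term to the defining sum and using the two compatibility facts above then gives the identity
\begin{equation*}
\bwedge g_f(\{x_1,\ldots,x_n\}_n)=\{\bwedge g_f(x_1),\ldots,\bwedge g_f(x_n)\}_n
\end{equation*}
on simple tensors, and the general case follows by $A$-additivity together with the first structure equation of \eqref{LR-morph}, which ensures that $\bwedge g_f$ is well defined across the change of scalars.

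The only mild subtlety, and the step I would treat most carefully, is the interaction of $\bwedge g_f$ with the $A$-linear combinations used to extend the bracket from simple tensors to arbitrary elements; but this is precisely what the compatibility $\bwedge g_f(a\cdot_A x)=f(a)\cdot_B \bwedge g_f(x)$ from the previous proposition guarantees, so no further work is required.
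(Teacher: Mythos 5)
Your proposal is correct and follows essentially the same route as the paper: the case $n=2$ is Proposition~\ref{schouten-morphism}, and for $n\geq 3$ one applies $\bwedge g_f$ term-by-term to the defining shuffle sum, using that it commutes with the exterior product and the Schouten--Nijenhuis bracket and preserves tensor degrees (so all Koszul signs are unchanged). Your explicit attention to the $A$-additivity extension and the change of scalars is a welcome elaboration of a point the paper leaves implicit, but it is not a different method.
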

\begin{proof}We need to show that $\bwedge g_f$ commutes 
with the Lie $n$-bracket for any $n\in\N$. 
For $n=1$ this is trivial and for $n=2$ this is proposition (\ref{schouten-morphism}). 
For $n\geq 3$ it follows, since $\bwedge g$ commutes with the exterior 
product and the Schouten-Nijenhuis bracket.
\end{proof} 
The natural injection 
$A\oplus\mathfrak{g}\hookrightarrow \bwedge\mathfrak{g}_A\;;\;
(a,x)\mapsto(a,x)$ 
can't be a morphism of Lie $\I$-algebras, 
since it has to commute with all higher brackets, 
but these brackets are zero on $A\oplus\mathfrak{g}$.
However as the following theorem shows, a 
natural injection now comes as a weak morphism of Lie $\I$-algebras: 
\begin{definition}Let $(A,\mathfrak{g})$ be a Lie Rinehart pair with 
associated graded Lie algebra $A\oplus\mathfrak{g}$,
exterior algebra $\bwedge\mathfrak{g}_A$ and 
\begin{equation}
\begin{array}{crcl}
i_n: & A\oplus\mathfrak{g}\times \cdots \times A\oplus\mathfrak{g} &\to &
 \bwedge\mathfrak{g}_A\\
     &   (x_1,\ldots,x_n) & \mapsto &
 (-1)^{n-1}(n-1)\,!\cdot x_n\smwedge\cdots\smwedge x_1
\end{array}
\end{equation}
for any $n\in \N$. Then the sequence $i_\I:=(i_n)_{n\in\N}$ is called the
\textbf{natural injection} of the Lie Rinehart pair into its exterior 
Lie $\I$-algebra.
\end{definition}
The following theorem shown, that this sequence of multilinear maps is in fact
a morphism of Lie $\I$-algebras:
\begin{theorem}The natural injection 
$i_\I: A\oplus\mathfrak{g}\to \bwedge\mathfrak{g}_A$ is a morphism of Lie $\I$-algebras.
\end{theorem}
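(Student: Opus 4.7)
The plan is to verify the $L_\I$-morphism coherence relations for $i_\I$ directly. By $\R$-multilinearity it suffices to restrict to homogeneous inputs $x_1,\ldots,x_n\in A\cup\mathfrak{g}$, so every $x_i$ has tensor degree $0$ or $1$. First I would record the easy structural checks: each $i_n$ is $\R$-multilinear, homogeneous of the correct tensor degree, and graded symmetric. The last point follows from the graded symmetry of the exterior product, since the reversal $x_n\smwedge\cdots\smwedge x_1 = e(x_1,\ldots,x_n)\, x_1\smwedge\cdots\smwedge x_n$ exactly absorbs the Koszul sign of any input permutation, and the scalar $(-1)^{n-1}(n-1)!$ plays no role here.

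For the coherence equations themselves, the key observation is that on the source $A\oplus\mathfrak{g}$ only the binary bracket $\mu_2$ is nontrivial, so the entire ``source side'' of the $n$-th relation collapses to a single shuffle sum
$$
\textstyle\sum_{s\in Sh(2,n-2)} e(s)\, i_{n-1}\bigl(\mu_2(x_{s(1)},x_{s(2)}),x_{s(3)},\ldots,x_{s(n)}\bigr)\,,
$$
which after unfolding $i_{n-1}$ becomes a weighted sum of exterior products of $n-1$ elements of $A\oplus\mathfrak{g}$ with exactly one factor replaced by a binary bracket. On the ``target side'' each contribution $\{i_{j_1}(\ldots),\ldots,i_{j_k}(\ldots)\}_k$ with $j_1+\cdots+j_k=n$ and $k\geq 2$ produces, by the definition of $\{\cdot\}_k$, an exterior product of its $k$ wedge arguments with one antisymmetric Schouten-Nijenhuis bracket inserted. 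Applying the Poisson derivation rule of the previous theorem to that single Schouten-Nijenhuis bracket expands it into a sum of exterior products containing exactly one binary bracket $[x_i,x_j]$ of original inputs, which by proposition \ref{schouten-morphism} and the subsequent remark agrees with $\mu_2(x_i,x_j)$. Both sides therefore live in the same space of ``exterior products of $n-1$ inputs with one $\mu_2$-insertion''.

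What remains is to match coefficients, and I expect this to be the main obstacle. On the source side the coefficient of each such monomial depends only on $(-1)^{n-2}(n-2)!$ and one shuffle sign from $Sh(2,n-2)$; on the target side it is an alternating sum over partitions $(j_1,\ldots,j_k)$ of $n$ weighted by $\prod_r (-1)^{j_r-1}(j_r-1)!$, by the shuffle signs appearing in the definition of $\{\cdot\}_k$, and by the multiplicities produced by the Poisson expansion. My expectation, in analogy with the cancellations by fixed transpositions already used in the previous theorem, is that all $k\geq 3$ contributions cancel pairwise and the surviving $k=2$ terms recombine via a binomial identity into the single shuffle over $Sh(2,n-2)$ on the source side. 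The factor $(-1)^{n-1}(n-1)!$ built into the definition of $i_n$ is precisely the one that makes this telescoping identity balance, yielding the $L_\I$-morphism relations.
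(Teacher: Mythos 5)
Your structural reductions track the paper's argument closely: on the source only the binary bracket survives, the unary bracket is zero, and after expanding each $\{i_{j_1}(\cdots),\ldots,i_{j_p}(\cdots)\}_p$ via the definition of the higher brackets and the biderivation property of the Schouten--Nijenhuis bracket, every term on both sides of the $n$-th coherence relation becomes a scalar multiple of the single universal expression $\sum_{s\in S_n}e(s)\,x_{s(n)}\smwedge\cdots\smwedge x_{s(3)}\smwedge[x_{s(2)},x_{s(1)}]$. Up to that point you are following the same route as the paper.

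The gap is in your proposed mechanism for matching the scalars. You expect all contributions with $k\geq 3$ to cancel pairwise, leaving only the $k=2$ terms to be balanced by a binomial identity. That is false: after absorbing the shuffle sums, a fixed composition $(j_1,\ldots,j_p)$ of $n$ contributes, up to a sign common to both sides, the coefficient $\frac{(-1)^p}{p!}\cdot\frac{1}{j_1\cdots j_p}\sum_{1\leq l<m\leq p}j_l\,j_m$, and these terms neither vanish nor cancel among themselves for $p\geq 3$. Already for $n=3$ the two $p=2$ compositions contribute $1$ in total while the left side requires $\tfrac12$; it is precisely the $p=3$ term, contributing $-\tfrac12$, that corrects this. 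The proof therefore hinges on the identity
\begin{equation*}
\textstyle\sum_{p=2}^n\frac{(-1)^p}{p!}\sum_{j_1+\cdots+j_p=n}
\frac{1}{j_1\cdots j_p}\sum_{1\leq l<m\leq p}j_l\,j_m=\frac12\,,
\end{equation*}
which the paper establishes by a generating-function computation (reducing to $\frac{e^{\ln(1-x)}}{2}\cdot\frac{x^2}{(1-x)^2}=\frac{x^2}{2}+\frac{x^3}{2}+\cdots$), not by pairwise cancellation. Without this identity, or an equivalent of it, your coefficient matching does not close, and the factor $(-1)^{n-1}(n-1)!$ in $i_n$ cannot be justified by the $k=2$ terms alone. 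A minor further point: you correctly allow inputs of tensor degree $0$ or $1$, but the computation you sketch (like the paper's) only treats the all-vector case; the degenerate cases with scalar arguments still need a word.
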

\begin{proof}Any map $i_{n}$ is graded symmetric and homogeneous of 
tensor degree zero, since the same holds for the exterior product.

In (the Lie $\I$-algebra) $A\oplus\mathfrak{g}$, only the binary bracket 
does not vanish and since $\{\cdot\}_1$ is the zero operator, the general structure equation (\ref{lie-infty-morph}) 
of a Lie $\I$-algebra morphism simplifies for any $n\geq 2$ into
\begin{multline*}
\textstyle\sum_{s\in Sh(2,n-2)}e(s)
 i_{n-1}([x_{s(1)},x_{s(2)}],x_{s(3)},\ldots,x_{s(n)})=\\
\textstyle\sum_{p=2}^n
 \frac{1}{p!}\sum_{s\in Sh(j_{1},\ldots,j_{p})}^{j_{1}+\ldots+j_{p}=n}
  e(s)\{i_{j_{1}}(x_{s(1)},\ldots,x_{s(k_{1})}),...,
   i_{j_{p}}(x_{s(n-j_{p}+1)},\ldots,x_{s(n)})\}_p\;.
\end{multline*}

Now assume $n\geq 2$ and $2\leq p \leq n$ as well as $j_1+\ldots+j_p=n$ for positive
integers $j_k$. We use the graded symmetry of the Lie $n$-brackets and the 
maps $i_n$, 
to rewrite the shuffle sum at the right side of the structure equation into a sum over arbitrary permutations:
\begin{multline*}
\textstyle\sum_{s\in Sh(j_{1},\ldots,j_{p})}e(s)
 \{i_{j_{1}}(x_{s(1)},\ldots,x_{s(j_{1})}),
  \ldots,i_{j_{p}}(x_{s(n-j_{p}+1)},\ldots,x_{s(n)})\}_{p}=\\	
\textstyle\frac{1}{j_{1}!\cdots j_{p}!}\sum_{s\in S_{n}}e(s)
 \{i_{j_{1}}(x_{s(1)},\ldots,x_{s(j_{1})}),
  \ldots,i_{j_{p}}(x_{s(n-j_{p}+1)},\ldots,x_{s(n)})\}_{p}
\end{multline*}

To reorganize this, define $j_0:=0$ and write 
$X_{k}^{s}:=i_{j_{k}}(x_{s(j_{k-1}+1)},...,x_{s(j_{k-1}+j_{k})})$ 
for any given vectors $x_1,\ldots,x_n\in\mathfrak{g}$, permutation $s\in S_n$
and $1\leq k \leq p$. 
Then $|X_{k}^s|=j_{k}$ and we can abbreviate the previous expression into
$$\textstyle\frac{1}{j_{1}!\cdots j_{p}!}
 \sum_{s\in S_{n}}e(s)\{X_{1}^s,\ldots,X_{p}^s\}_{p}\;.$$
Applying the definition of the Lie $p$-bracket
is now straight forward and leads to the expression
\begin{multline*}
\textstyle\frac{1}{j_{1}!\cdots j_{p}!}\sum_{s\in S_{n}}e(s)
 \sum_{t\in Sh(2,p-2)}e(t,X_{1}^{s},\ldots,X_{p}^{s})\cdot\\
  \cdot e(X_{t(1)}^{s})X_{t(p)}^{s}\smwedge\cdots\smwedge 
   X_{t(3)}^{s}\smwedge[X_{t(2)}^{s},X_{t(1)}^{s}]\;.	
\end{multline*}	
Substituting the definition of each $i_{j_k}$ back and using 
$e(X_{t(1)}^{s})=(-1)^{j_{t(1)}}$ this rewrites into  
\begin{multline*}
\textstyle\frac{1}{j_{1}!\cdots j_{p}!}\sum_{s\in S_{n}}e(s)
 \sum_{t\in Sh(2,p-2)}e(t,X_{1}^{s},\ldots,X_{p}^{s})
  (-1)^{(j_{t(p)}-1)}\cdots(-1)^{(j_{t(1)}-1)}\cdot\\
\cdot(-1)^{j_{t(1)}}(j_{t(p)}-1)!\cdots(j_{t(1)}-1)!
 (x_{s(j_{t(p)-1}+j_{t(p)})}\smwedge\cdots\smwedge x_{s(j_{t(p)-1}+1)})
  \smwedge\cdots\smwedge\\
\smwedge(x_{s(j_{t(3)-1}+j_{t(3)})}\smwedge\cdots\smwedge x_{s(j_{t(3)-1}+1)})
   \smwedge\\  
\smwedge[x_{s(j_{t(2)-1}+j_{t(2)})}\smwedge\cdots\smwedge x_{s(j_{t(2)-1}+1)},
  x_{s(j_{t(1)-1}+j_{t(1)})}\smwedge\cdots\smwedge x_{s(j_{t(1)-1}+1)}]
\end{multline*}
and since $e(s)$ as well as $e(t,X_1^s,\ldots,X^s_p)$ keeps properly track of the signs
we can reindex this. After simplification using 
$j_1+\cdots +j_p=n$ we get
\begin{multline*}
\textstyle
 \frac{1}{j_{1}\cdots j_{p}}\sum_{s\in S_{n}}e(s)
  \sum_{t\in Sh(2,p-2)}
(-1)^{j_{t(1)}+n-p}\;
 x_{s(n)}\smwedge\cdots\smwedge x_{s(j_{t(1)}+j_{t(2)}+1)}\smwedge\\
   \smwedge [x_{s(j_{t(1)}+j_{t(2)})}\smwedge\cdots\smwedge x_{s(j_{t(1)}+1)},
  x_{s(j_{t(1)})}\smwedge\cdots\smwedge x_{s(1)}]\;.
\end{multline*}
Since all arguments are actually vectors, we can apply the
symmetric defining expression (\ref{SN_1}) of the Schouten-Nijenuis bracket 
to simplify this further into
\begin{multline*}  
\textstyle(-1)^{n+p}\frac{1}{j_{1}\cdots j_{p}}\sum_{s\in S_{n}}e(s)
 \sum_{t\in Sh(2,p-2)}(-1)^{j_{t(1)}}
  \sum_{q\in Sh(1,j_{t(1)}-1)}\sum_{r\in Sh(1,j_{t(2)}-1)}\\
e(q)e(r)\;x_{s(n)}\smwedge\cdots\smwedge x_{s(j_{t(1)}+j_{t(2)}+1)}
 \smwedge x_{rs(j_{t(1)}+j_{t(2)})}\smwedge\cdots\smwedge x_{rs(j_{t(1)}+2)}
  \smwedge\\
  \smwedge x_{qs(j_{t(1)})}\smwedge\cdots\smwedge x_{qs(2)}\smwedge
   [x_{rs(j_{t(1)}+1)},x_{qs(1)}]\;.
\end{multline*}
Now observe, that for any $s\in S_n$ and shuffle 
$q \in Sh(1,j_{t(1)}-1)$, the permutation 
$(qs(1),\ldots,qs(j_{t(1)}), s(j_{t(1)}+1),\ldots,s(n))$ is again an element of 
$S_n$ and since there are precisely $j_{t(1)}$ many shuffles in 
$Sh(1,j_{t(1)}-1)$ we can just
'absorb' the appropriate sum over shuffles in the previous expression into the sum over
general permutation. The same is true for the shuffles $r \in Sh(1,j_{t(2)}-1)$. 
After reindexing we get:
\begin{multline*} 
\textstyle(-1)^{n+p}\frac{1}{j_{1}\cdots j_{p}}\sum_{s\in S_{n}}e(s)
  \sum_{t\in Sh(2,p-2)}(-1)^{j_{t(1)}}j_{t(1)}j_{t(2)}\cdot\\ 
\cdot x_{s(n)}\smwedge\cdots\smwedge x_{s(j_{t(1)}+2)}\smwedge
 x_{s(j_{t(1)})}\smwedge\cdots\smwedge x_{s(2)}\smwedge[x_{s(j_{t(1)}+1)},x_{s(1)}]=
\end{multline*}
$$
\textstyle(-1)^{n+p-1}\frac{1}{j_{1}\cdots j_{p}}
  \sum_{1\leq l<m\leq p}j_{l}\;j_{m}\;\sum_{s\in S_{n}}e(s)
   x_{s(n)}\smwedge\cdots\smwedge x_{s(3)}\smwedge [x_{s(2)},x_{s(1)}]      
$$
Using this we are able to rewrite the right side of the defining structure equation 
into
\begin{multline*}
\textstyle(-1)^{n+1}\sum_{p=2}^n\frac{(-1)^p}{p!}\sum_{j_1+\ldots+j_p=n}
\frac{1}{j_{1}\cdots j_{p}}
  \sum_{1\leq l<m\leq p}\;j_{l}\;j_{m}\;\sum_{s\in S_{n}}e(s)\cdot\\
\cdot x_{s(n)}\smwedge\cdots\smwedge x_{s(3)}\smwedge [x_{s(2)},x_{s(1)}]
\end{multline*}
and in addition, the left side of the defining structure equation can be rewritten
as 
\begin{multline*}
\textstyle(-1)^{n-2}\frac{(n-2)!}{2(n-2)!}\sum_{s\in S_n}e(s)
 x_{s(n)}\smwedge\cdots\smwedge x_{s(3)}\smwedge [x_{s(1)},x_{s(2)}]=\\
\textstyle(-1)^{n+1}\frac{1}{2}\sum_{s\in S_n}e(s)
 x_{s(n)}\smwedge\cdots\smwedge x_{s(3)}\smwedge [x_{s(2)},x_{s(1)}]\;.
\end{multline*}
Consequently the theorem follows since
\begin{equation}\label{combi_equation}
\textstyle\sum_{p=2}^n\frac{(-1)^p}{p!}\sum_{j_1+\ldots+j_p=n}
 \frac{1}{j_{1}\cdots j_{p}}\sum_{1\leq l<m\leq p}j_{l}\cdot j_{m}=\frac{1}{2}
\end{equation}
for any $n\geq 2$.
(This identity was communicated by Gjergji Zaimi at mathoverflow) To see it
consider the generating function
$$
\textstyle\sum_{p\geq 2} \frac{(-1)^p}{p!}\binom{p}{2}\left(x+\frac{x^2}{2}+\frac{x^3}{3}+\cdots\right)^{p-2}\left(x^2+2x^3+3x^4+\cdots\right)\;.$$
The coefficient of $x^n$ is precisely the left side of (\ref{combi_equation})
and to show that it actually equals $\frac{1}{2}$ use
$$
\begin{array}{ccc}
\textstyle x^2+2x^3+3x^4+\cdots=\frac{x^2}{(1-x)^2}& and &
\frac{e^{-t}}{2}=\sum_{p\geq 2}\frac{(-1)^p}{p!}\binom{p}{2} t^{p-2}
\end{array}
$$
to simplify the generating function to
$$
\textstyle\frac{e^{ln(1-x)}}{2}\frac{x^2}{(1-x)^2}=
 \frac{x^2}{2}+\frac{x^3}{2}+\frac{x^4}{2}+\frac{x^5}{2}+\cdots
$$
\end{proof}
\section{Conclusion and Outlook}
We defined a Lie $\I$-structure on the exterior power $\bwedge\mathfrak{g}_A$
of any Lie Rineard pair, but there is more structure on $\bwedge\mathfrak{g}_A$.
In fact one should look at the interaction of the higher brackets with the
exterior product, to eventually come to some kind of $\I$-Gerstenhaber structure. 
Moreover one should look for generalizations of the Leibniz rule
to the higher brackets.

\begin{appendix}
\section{Lie $\I$-algebras} We recall the most basic stuff
about Lie $\I$-algebras. There are many incarnations of them 
\cite{LV}, \cite{MM}, but we will only look at their 
graded symmetric, 'many brackets' version, since that picture fits nicely into
the Schouten calculus and is moreover useful when it comes to actual computations. 

Lie $\I$-algebras are defined on $\Z$-graded vector spaces and 
consequently we recall them first:
\subsection{Graded Vector Spaces}
In what follows $\mathbb{K}$ will always be a field and $\Z$ the Abelian
group of integers with respect to addition. 
A $\Z$-graded $\mathbb{K}$-vector space $V$ is the direct sum 
$\oplus_{n\in \Z} V_n$ of $\mathbb{K}$-vector spaces $V_n$.  
Since this is a coprodut, there are natural injections 
$i_n:V_n \to V$ and a vector is called \textbf{homogeneous of degree} $n$ 
if it is in the image of the injection $i_n$. 
In that case we write $deg(v)$ or $|v|$ for its degree.

According to a better readable text we just write graded vector space 
as a shortcut for $\Z$-graded $\mathbb{K}$-vector space.

A morphism $f : V \to W$ of graded vector spaces, homogeneous of degree $r$,  
is a sequence of linear maps $f_n : V_n \to W_{n+r}$ for any $n\in \Z$ and
the integer $r\in\Z$ is called the degree of $f$, denoted by $deg(f)$ (or $|f|$). 

For any $n\in\N$, an $n$-multilinear map $f: V_1 \times \cdots \times V_n \to W$,
homogeneous of degree $r$ is a sequence of $n$-multilinear maps 
$f_{k} : (V_1)_{n_1} \times \ldots \times (V_k)_{n_k} \to W_{\sum n_i+r}$ for all $j_i\in \Z$ with $\sum j_i=k$.

The $\Z$-graded tensor product $V \otimes W$ of two graded vector spaces 
$V$ and $W$ is given by
$$
\textstyle\left(V \otimes W \right)_n :=
\oplus_{i+j=n}\left( V_i \otimes W_j\right)
$$
and the Koszul commutativity constraint
$\tau: V \otimes W \to  W \otimes V$ is on homogeneous elements 
$v\otimes w \in V \otimes W$ defined 
by 
$$\tau(v \otimes w):=(-1)^{deg(v)deg(w)} w \otimes v$$ and then extended to
$V\otimes W$ by linearity.

\begin{remark}
We define the symbols $e(v):=(-1)^{deg(v)}$, 
$e(v,w):=(-1)^{deg(v)deg(w)}$. The \textbf{Koszul sign} 
$e(s;v_1,\ldots,v_k) \in \{-1,+1\}$ is defined for any permutation $s\in S_k$ and
any homogeneous vectors $v_1,\ldots,v_k\in V$ by 
\begin{equation}\label{Koszul_convention}
v_1\otimes \ldots \otimes v_k= e(s;v_1,\ldots,v_k) 
 v_{s(1)}\otimes \ldots \otimes v_{s(k)}.
\end{equation}
In an actual computation it can be determined by the following rules: 
When a permutation $s\in S_k$ is a transposition  $j\leftrightarrow j+1$
of consecutive neighbors, 
then $e(s;v_1,\ldots,v_k)= (-1)^{deg(v_j)\cdot deg(v_{+1})}$ and 
if $t\in S_k$ is another permutation, then
$e(ts;v_1,\ldots,v_k)=e(t;v_{s(1)},\ldots,v_{s(k)})e(s;v_1,\ldots,v_k)$.
\end{remark}
A graded $k$-linear morphism $ f: \bigtimes^k V \to W$ is called 
\textbf{graded symmetric} if 
$$f(v_1,\ldots,v_k) = e(s;v_1,\ldots,v_k)f(v_{s(1)},\ldots,v_{s(k)})$$
for all $s\in S_k$. 
\subsection{Shuffle Permutation}
Let $S_k$ be the symmetric group, i.e the group of all bijective maps 
of the ordinal $\ordinal{k}$.
\begin{definition}[Shuffle Permutation] For any $p,q\in \N$
a $(p,q)$-shuffle is a permutation 
$s\in S_{p+q}$ with $s(1)<\ldots<s(p)$ and $s(p+1)<\ldots<s(p+q)$.
We write $Sh(p,q)$ for the set of all $(p,q)$-shuffles.

More generally for any $p_1,\ldots,p_n\in\N$ a $(p_1,\ldots,p_n)$-shuffle
is a permutation $s\in S_{p_1+\cdots+p_n}$ with  
$s(p_{j-1}+1)<\ldots<s(p_{j-1}+p_{j})$. 
We write $Sh(p_1,\ldots,p_n)$ for the set of all $(p_1,\ldots,p_n)$-shuffles.
\end{definition}
For more on shuffles, see for example at \cite{RS}.
\subsection{Lie $\I$-algebas} On the structure level Lie $\I$-algebras generalize 
(differential graded) Lie-algebras to a setting where the Jacobi identity isn't 
satisfied any more, but holds up to particular higher brackets. This can be defined 
in many different ways \cite{LV}, 
but the one that works best for us is its 'graded symmetric, many bracket' version.
\begin{definition} A \textbf{Lie $\I$-algebra} $\left(V,(D_k)_{k\in \N}\right)$
is a $\Z$-graded $\R$-vector space $V$, together with a sequence $(D_k)_{k\in \N}$ of
graded symmetric, $k$-multilinear maps $D_k : \bigtimes^k V \to V$, 
homogeneous of of degree $-1$, such that the \textbf{weak Jacobi equations}
$$\label{sh_Jacobi}
\textstyle\sum_{i+j=n+1} \left(\sum_{s\in Sh(j,n-j)}
e\left(s;v_1,\ldots,v_n\right)D_i\left(D_j \left(
v_{s_1}, \ldots, v_{s_j} \right), v_{s_{j+1}}, \ldots, v_{s_n}\right)\right) = 0
$$
are satisfied for any integer $n \in \N$ and any vectors $v_1,\ldots,v_n \in V$. 
\end{definition} 
In particular Lie $\I$-algebras generalizes ordinary Lie algebras, if the grading is chosen 
right:
\begin{example}[Lie Algebra] Every Lie algebra $\left(V,[\cdot,\cdot]\right)$ is a  
Lie $\I$-algebra if we consider $V$ as concentrated in degree one and define $D_k=0$ for any $k \neq 2$ as well as $D_2(\cdot,\cdot):=[\cdot,\cdot]$.
\end{example}
Very different from common Lie theory is, that a morphism
of Lie $\I$-algebras is not necessarily just a single map. In fact such a 
morphism is a \textit{sequence} of maps, satisfying a particular structure
equation. To understand how these morphisms emerge, look for example at \cite{MM}.
\begin{definition}For any two Lie $\I$-algebras $(V,(D_k)_{k\in\N})$ and 
$(W,(l_k)_{k\in\N})$ a \textbf{morphism of Lie $\I$-algebras} is a sequence
$(f_k)_{k\in\N}$ of graded symmetric, $k$-multilinear maps
$$
f_k: V \times \cdots \times V \to W
$$
homogeneous of degree $0$, such that the structure equation
\begin{multline}\label{lie-infty-morph}
\textstyle\sum_{p+q=n+1}\left({\sum_{s\in Sh(q,p-1)}e\left(s\right)}
f_p\left(D_{q}\left(v_{s(1)},\ldots,v_{s(q)}\right),
v_{s(q+1)},\ldots, v_{s(n)}\right)\right)=\\
\textstyle\sum_p\frac{1}{p!}\sum^{k_1+\ldots +k_p =n}_{s\in Sh(k_1,\ldots,k_p)}
e\left(s\right)
l_p\left(f_{k_1}\left(v_{s(1)},\ldots,v_{s(k_1)}\right),
\ldots,
f_{k_p}\left((v_{s(n-k_p+1)},\ldots,v_{s(n)}\right)\right)
\end{multline}
is satisfied for any $n\in\N$ and any vectors $v_1,\ldots,v_n\in V$.

The morphism is called \textbf{strict}, 
if in addition $f_k=0$ for all $k \geq 2$,that is, if the morphism is a
single map, that commutes with all brackets. 
\end{definition}
\end{appendix}

\end{document}